\numberwithin{equation}{section}
\numberwithin{figure}{section}
  \theoremstyle{remark}
  \newtheorem*{acknowledgement*}{Acknowledgement}
  \theoremstyle{remark}
  \newtheorem*{rem*}{Remark}
\theoremstyle{plain}
\newtheorem{thm}{Theorem}
  \theoremstyle{definition}
  \newtheorem{defn}[thm]{Definition}
  \theoremstyle{plain}
  \newtheorem{prop}[thm]{Proposition}
 \theoremstyle{definition}
  \newtheorem{example}[thm]{Example}
  \theoremstyle{plain}
  \newtheorem{lem}[thm]{Lemma}
  \theoremstyle{plain}
  \newtheorem{cor}[thm]{Corollary}
\begin{document}

\title{On the Product in Negative Tate Cohomology for Finite Groups}

\author{Haggai Tene}
\begin{abstract}
Our aim in this paper is to give a geometric description of the cup
product in negative degrees of Tate cohomology of a finite group with
integral coefficients. By duality it corresponds to a product in the
integral homology of $BG$: {\normalsize \[
H_{n}(BG,\mathbb{Z})\otimes H_{m}(BG,\mathbb{Z})\rightarrow H_{n+m+1}(BG,\mathbb{Z})\]
}for $n,m>0$. We describe this product as join of cycles, which explains
the shift in dimensions. Our motivation came from the product defined
by Kreck using stratifold homology. We then prove that for finite
groups the cup product in negative Tate cohomology and the Kreck product
coincide. The Kreck product also applies to the case where $G$ is
a compact Lie group (with an additional dimension shift).
\end{abstract}
\maketitle

\section*{Introduction}

For a finite group $G$ one defines Tate cohomology with coefficients
in a $\mathbb{Z}[G]$ module $M$, denoted by $\widehat{H}^{*}(G,M)$.
This is a multiplicative theory: \[
\widehat{H}^{n}(G,M)\otimes\widehat{H}^{m}(G,M')\rightarrow\widehat{H}^{n+m}(G,M\otimes M')\]
and the product is called cup product. For $n>0$ there is a natural
isomorphism $H^{n}(G,M)\rightarrow\widehat{H}^{n}(G,M)$, and for
$n<-1$ there is a natural isomorphism $\widehat{H}^{n}(G,M)\rightarrow H_{-n-1}(G,M)$.
We restrict ourselves to coefficients in the trivial module $\mathbb{Z}$.
In this case, $\widehat{H}^{*}(G,\mathbb{Z})$ is a graded ring. Also,
in this case the group cohomology and homology are actually the cohomology
and homology of a topological space, namely $BG$, the classifying
space of principal $G$ bundles - $H^{n}(G,\mathbb{Z})\cong H^{n}(BG,\mathbb{Z})$
and $H_{n}(G,\mathbb{Z})\cong H_{n}(BG,\mathbb{Z})$. Combining this
with the isomorphism we had before $\widehat{H}^{n}(G,\mathbb{Z})\rightarrow H_{-n-1}(G,\mathbb{Z})$
for $n<-1$ we get a product $H_{n}(BG,\mathbb{Z})\otimes H_{m}(BG,\mathbb{Z})\rightarrow H_{n+m+1}(BG,\mathbb{Z})$
for $n,m>0$. Note the dimension shift. This product, with coefficients
in a field of characteristics $p$ rather than $\mathbb{Z}$, was
studied in \cite{key-2}. Our aim in this paper is to give a geometric
description of this product. We give a rather concrete description
in singular homology that involves the join of cycles, and that explains
the shift in dimension. 

Our motivation came from a geometric description of $H_{*}(G,\mathbb{Z})$
which appears in \cite{K2} and the product defined by Kreck using
stratifold homology. We then prove that the cup product in negative
Tate cohomology and the Kreck product coincide. An advantage in Kreck
theory is that it holds also for compact Lie groups giving a product:
\[
H_{n}(BG,\mathbb{Z})\otimes H_{m}(BG,\mathbb{Z})\rightarrow H_{n+m+1+dim(G)}(BG,\mathbb{Z})\]

\begin{acknowledgement*}
This paper is a part of the author's PhD thesis, written under the
direction of Prof. Matthias Kreck in the Hausdorff Research Institute
for Mathematics (HIM). The author would like to thank Prof. Kreck
for his support and the Hausdorff institute for the time he spent
there. 
\end{acknowledgement*}

\section*{Tate cohomology}
\begin{rem*}
In this paper $R$ is assumed to be a ring with unit, not necessarily
commutative, and all modules are assumed to be left $R$-modules unless
stated otherwise. The group $G$ is assumed to be finite unless stated
otherwise.
\end{rem*}
We start by defining Tate cohomology and the cup product as appears
in \cite{key-4}. To do so we introduce the language taken from the
stable module category. We will not get into details, for a formal
treatment the reader is referred to the appendix.

Let $M,N$ be two $R$-modules, denote by $\underline{Hom}_{R}(M,N)$
the quotient of $Hom_{R}(M,N)$ by the maps that factor through some
projective module. 
\begin{defn}
Given an $R$-module $M$, denote by $\Omega^{k}M$ the following
module:\\
Take any partial projective resolution of $M$, $P_{k-1}\xrightarrow{d_{k-1}}P_{k-2}...P_{0}\rightarrow M$
then $\Omega^{k}M=ker(P_{k-1}\xrightarrow{d_{k-1}}P_{k-2})$. If $k=1$
we simply denote it by $\Omega M$. This module clearly depends on
the choice of the resolution. Nevertheless, as proved in the appendix,
the modules $\underline{Hom}_{R}(\Omega^{k}M,\Omega^{l}N)$ do not
depend on the choice of resolutions i.e., they are well defined up
to canonical isomorphisms. If we would like to stress the dependency
on $P$ we would use the notation $\Omega_{P}^{k}M$. 
\end{defn}
\begin{flushleft}
Note that there is a natural map $\Psi:\underline{Hom}_{R}(M,N)\rightarrow\underline{Hom}_{R}(\Omega M,\Omega N)$. 
\par\end{flushleft}
\begin{defn}
The Tate cohomology of $G$ with coefficients in a $\mathbb{Z}[G]$
module $M$ is given by: \[
\widehat{H}^{n}(G,M)=\widehat{Ext}_{\mathbb{Z}[G]}^{n}(\mathbb{Z},M)={\textstyle \underset{m}{\underrightarrow{lim}}}\underline{Hom}_{\mathbb{Z}[G]}(\Omega^{n+m}\mathbb{Z},\Omega^{m}M)\]
where $\mathbb{Z}$ is the trivial $\mathbb{Z}[G]$-module (if $n<0$
we start this sequence from $m=-n$). 
\end{defn}
In our case, where $G$ is finite, we have the following proposition
which is proved in the appendix:
\begin{prop}
If $G$ is a finite group and $M$ is a $\mathbb{Z}[G]$-module which
is projective as a $\mathbb{Z}$-module then the homomorphism $\Psi:\underline{Hom}_{\mathbb{Z}[G]}(M,N)\rightarrow\underline{Hom}_{\mathbb{Z}[G]}(\Omega M,\Omega N)$
is an isomorphism. 
\end{prop}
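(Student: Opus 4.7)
The plan is to reduce both sides of $\Psi$ to the same $Ext^{1}$-group and to read off $\Psi$ as the comparison of the two expressions. All of the finite-group and $\mathbb{Z}$-projectivity hypotheses get concentrated in a single vanishing statement: for such $M$ and every projective $\mathbb{Z}[G]$-module $Q$, one has $Ext^{i}_{\mathbb{Z}[G]}(M,Q)=0$ for $i\geq 1$. Establishing this vanishing is the main technical step, and it is where finiteness of $G$ must enter.

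To prove the vanishing I would use the Frobenius property of $\mathbb{Z}[G]$: for finite $G$ there is a canonical $\mathbb{Z}[G]$-isomorphism $\mathbb{Z}[G]\cong Hom_{\mathbb{Z}}(\mathbb{Z}[G],\mathbb{Z})$, coming from the dual of the basis $\{g\}_{g\in G}$. The tensor--Hom adjunction yields
\[
Hom_{\mathbb{Z}[G]}(L,Hom_{\mathbb{Z}}(\mathbb{Z}[G],A))\cong Hom_{\mathbb{Z}}(L,A)
\]
for any abelian group $A$. Deriving in $L=M$, and using that any $\mathbb{Z}[G]$-projective resolution $P_{\bullet}\to M$ is automatically a $\mathbb{Z}$-projective resolution once $M$ is $\mathbb{Z}$-projective (the successive kernels split off as $\mathbb{Z}$-summands of the previous $P_{i}$), one obtains $Ext^{*}_{\mathbb{Z}[G]}(M,Hom_{\mathbb{Z}}(\mathbb{Z}[G],A))\cong Ext^{*}_{\mathbb{Z}}(M,A)$, which vanishes in positive degrees. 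Taking $A=\mathbb{Z}$ gives the vanishing for $Q=\mathbb{Z}[G]$; since $\mathbb{Z}[G]$ is finitely generated over $\mathbb{Z}$, $Hom_{\mathbb{Z}}(\mathbb{Z}[G],-)$ commutes with arbitrary direct sums, so the vanishing propagates to all free, and hence all projective, $Q$.

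The rest is standard dimension shifting. Pick surjections $P\twoheadrightarrow M$ and $Q\twoheadrightarrow N$ with $P,Q$ projective and kernels $\Omega M,\Omega N$. Applying $Hom_{\mathbb{Z}[G]}(M,-)$ to $0\to\Omega N\to Q\to N\to 0$ and using $Ext^{1}(M,Q)=0$ yields a surjection $Hom(M,N)\twoheadrightarrow Ext^{1}(M,\Omega N)$ whose kernel is the maps factoring through $Q$; lifting any $M\to R\to N$ with $R$ projective through $Q$ shows this kernel equals the class of maps factoring through an arbitrary projective, so $\underline{Hom}(M,N)\cong Ext^{1}(M,\Omega N)$. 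Symmetrically, applying $Hom(-,\Omega N)$ to $0\to\Omega M\to P\to M\to 0$ and using $Ext^{1}(P,\Omega N)=0$ gives a surjection $Hom(\Omega M,\Omega N)\twoheadrightarrow Ext^{1}(M,\Omega N)$. To see that its kernel is exactly the maps factoring through a projective, I would invoke the vanishing lemma once more: given $\Omega M\xrightarrow{a}R\xrightarrow{b}\Omega N$ with $R$ projective, the pushout extension $0\to R\to P\cup_{\Omega M}R\to M\to 0$ lies in $Ext^{1}(M,R)=0$ and therefore splits, producing an extension $\widetilde a\colon P\to R$ of $a$, and hence $b\widetilde a\colon P\to\Omega N$ extending $ba$.

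Combining the two identifications yields $\underline{Hom}(M,N)\cong\underline{Hom}(\Omega M,\Omega N)$, and a small diagram chase shows the resulting isomorphism coincides with $\Psi$, because both compositions send $f\colon M\to N$ to the restriction to $\Omega M$ of any projective lift $\widetilde f\colon P\to Q$. The whole argument therefore rests on the single Frobenius-based $Ext$-vanishing lemma; everything else is routine dimension shifting.
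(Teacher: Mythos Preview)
Your argument is correct, but it proceeds along a different line from the paper. The paper constructs the inverse to $\Psi$ directly: given $f\colon \Omega M\to\Omega N$, one uses that for finite $G$ every projective $\mathbb{Z}[G]$-module is \emph{relatively injective} (extensions along $\mathbb{Z}$-split monomorphisms are possible), so the composite $\Omega M\to\Omega N\hookrightarrow P_N$ extends over $\Omega M\hookrightarrow P_M$, which is $\mathbb{Z}$-split precisely because $M$ is $\mathbb{Z}$-projective. The induced map on cokernels gives the desired $\overline f\colon M\to N$, and one checks by hand that this is inverse to $\Psi$.

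Your route instead isolates the key input as the vanishing $Ext^{\geq 1}_{\mathbb{Z}[G]}(M,Q)=0$ for $Q$ projective, proved via the Frobenius self-duality $\mathbb{Z}[G]\cong Hom_{\mathbb{Z}}(\mathbb{Z}[G],\mathbb{Z})$ together with Shapiro/adjunction, and then dimension-shifts both $\underline{Hom}(M,N)$ and $\underline{Hom}(\Omega M,\Omega N)$ onto the common target $Ext^{1}(M,\Omega N)$. The underlying fact is really the same---your $Ext$-vanishing is exactly the derived form of relative injectivity against $\mathbb{Z}$-projective sources---but the packaging differs. The paper's proof is shorter and more hands-on; yours makes the role of the hypotheses more transparent and sits the statement inside the standard $Ext$ formalism, which is arguably cleaner if one later wants to iterate or generalize. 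One small remark: your parenthetical about successive kernels splitting as $\mathbb{Z}$-summands is not actually needed---any $\mathbb{Z}[G]$-projective resolution is automatically a $\mathbb{Z}$-projective resolution, and the $\mathbb{Z}$-projectivity of $M$ enters only to kill $Ext^{>0}_{\mathbb{Z}}(M,A)$.
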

Therefore, since $\mathbb{Z}$ and $\Omega^{k}\mathbb{Z}$ are projective
as $\mathbb{Z}$-modules this limit equals to \[
\widehat{H}^{n}(G,M)=\underline{Hom}_{\mathbb{Z}[G]}(\Omega^{n}\mathbb{Z},M)\]
if $n\geq0$ or \[
\widehat{H}^{n}(G,M)=\underline{Hom}_{\mathbb{Z}[G]}(\mathbb{Z},\Omega^{-n}M)\]
if $n<0$. Our main interest will be the second case, especially when
$M=\mathbb{Z}$.
\begin{example}
$\widehat{H}^{-1}(G,\mathbb{Z})=\underline{Hom}_{\mathbb{Z}[G]}(\mathbb{Z},\Omega\mathbb{Z})$.
Take the following exact sequence $0\rightarrow I\rightarrow\mathbb{Z}[G]\xrightarrow{f}\mathbb{Z}\rightarrow0$
where the map $f$ is the augmentation map and $I$ is the augmentation
ideal, so $I=\Omega\mathbb{Z}$. Therefore $Hom_{\mathbb{Z}[G]}(\mathbb{Z},\Omega\mathbb{Z})\cong Hom_{\mathbb{Z}[G]}(\mathbb{Z},I)=\left\{ 0\right\} $
so $\widehat{H}^{-1}(G,\mathbb{Z})=\left\{ 0\right\} $.

Let $G$ be a finite group. We construct a natural isomorphism $\widehat{H}^{-n-1}(G,\mathbb{Z})\rightarrow H_{n}(G,\mathbb{Z})$
for $n\geq1$. Before that we prove a small lemma.\end{example}
\begin{lem}
Let $G$ be a finite group and $P$ a projective $\mathbb{Z}[G]$-module,
then for every element $x\in P$ we have:\\
1) $x\in P^{G}\Leftrightarrow\exists y\in P,x=Ny$\\
2) $y\otimes1=y'\otimes1\in P\otimes_{\mathbb{Z}[G]}\mathbb{Z}\Leftrightarrow Ny=Ny'$\\
Where $P^{G}$ are the invariants of $P$ under the action of $G$,
$N$ is the norm homomorphism defined by multiplication by the element
$N=\underset{g\in G}{\sum}g\in\mathbb{Z}[G]$.\end{lem}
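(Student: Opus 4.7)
The plan is to observe that statements (1) and (2) together just say that the norm map $P_{G}\to P^{G}$, $[y]\mapsto Ny$, is an isomorphism, with (1) encoding surjectivity and (2) encoding injectivity. Since both statements are additive in $P$, and any projective $P$ is a summand of a free module, it suffices to verify them for $P=\mathbb{Z}[G]$ and then transport the result along direct sums and direct summands.

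First I would verify the rank-one case $P=\mathbb{Z}[G]$ by direct computation. Writing $x=\sum_{g}a_{g}g$, the condition $hx=x$ for all $h$ forces all the coefficients $a_{g}$ to coincide, so $P^{G}=\mathbb{Z}\cdot N$; taking $y=a_{1}\cdot 1$ gives $Ny=x$, and conversely $gN=N$ shows that every element of the form $Ny$ is invariant, which proves (1). For (2) I would use the canonical identification $\mathbb{Z}[G]\otimes_{\mathbb{Z}[G]}\mathbb{Z}=\mathbb{Z}$ via the augmentation $\varepsilon$, under which $y\otimes 1$ becomes $\varepsilon(y)$; since $Ny=\varepsilon(y)\cdot N$ and the map $n\mapsto nN$ from $\mathbb{Z}$ to $\mathbb{Z}[G]$ is injective, the equivalence $y\otimes 1=y'\otimes 1\Leftrightarrow Ny=Ny'$ is immediate.

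An arbitrary free module $F=\bigoplus_{i}\mathbb{Z}[G]$ is then handled coordinate-wise: taking $G$-invariants, tensoring with $\mathbb{Z}$, and applying the norm all commute with direct sums. Finally, for a projective $P$ I would pick a complement $Q$ so that $F=P\oplus Q$ is free. For (1), any $x\in P^{G}\subset F^{G}$ equals $Nf$ for some $f=y+q\in F$; since the projection $F\to P$ is $\mathbb{Z}[G]$-linear (hence $G$-equivariant) and fixes $x$, it sends $Nf=N y+Nq$ to $Ny$, giving $x=Ny$ with $y\in P$. For (2), the splitting $F\otimes_{\mathbb{Z}[G]}\mathbb{Z}=(P\otimes_{\mathbb{Z}[G]}\mathbb{Z})\oplus(Q\otimes_{\mathbb{Z}[G]}\mathbb{Z})$ shows that $y\otimes 1=y'\otimes 1$ in $P\otimes_{\mathbb{Z}[G]}\mathbb{Z}$ is equivalent to the same identity in $F\otimes_{\mathbb{Z}[G]}\mathbb{Z}$, which by the free case is equivalent to $Ny=Ny'$ in $F$, i.e.\ in $P$.

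I do not expect any serious obstacle: the only genuine content is the rank-one calculation, and the passage to projectives is purely formal. The one sanity check worth noting is that the norm descends to coinvariants, which holds because $Ng=N$ implies $N(gy-y)=0$, so the map $[y]\mapsto Ny$ referred to above is indeed well-defined.
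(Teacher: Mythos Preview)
Your proof is correct and complete. You correctly identify that (1) and (2) are precisely surjectivity and injectivity of the norm map $P_G\to P^G$, verify this by hand for $\mathbb{Z}[G]$, and push it to projectives via direct sums and summands; every step checks out.

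The paper takes a different, less elementary route. It invokes the standard exact sequence
\[
0\longrightarrow \widehat{H}^{-1}(G,M)\longrightarrow H_{0}(G,M)\xrightarrow{\ N\ } H^{0}(G,M)\longrightarrow \widehat{H}^{0}(G,M)\longrightarrow 0
\]
(from Brown, \emph{Cohomology of Groups}, VI.4) and the vanishing of Tate cohomology on projectives to conclude in one stroke that $N:P\otimes_{\mathbb{Z}[G]}\mathbb{Z}\to P^{G}$ is an isomorphism, reading off (1) from surjectivity and (2) from injectivity. This is shorter if one is willing to import the Tate machinery, but it has a mildly circular flavor in context, since the lemma is being used to set up the isomorphism $\widehat{H}^{-n-1}(G,\mathbb{Z})\cong H_{n}(G,\mathbb{Z})$. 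Your argument, by contrast, is entirely self-contained and uses nothing beyond the definition of a projective module; the trade-off is that you have to do the rank-one computation and the summand bookkeeping by hand.
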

\begin{proof}
For every $\mathbb{Z}[G]$-module $M$ the following sequence is exact:\[
0\rightarrow\widehat{H}^{-1}(G,M)\rightarrow H_{0}(G,M)\rightarrow H^{0}(G,M)\rightarrow\widehat{H}^{0}(G,M)\rightarrow0\]
where the map $H_{0}(G,M)\rightarrow H^{0}(G,M)$ is the norm map
$N:M\otimes\mathbb{Z}\rightarrow M^{G}$ given by ($N(x\otimes k)=kNx$)
(\cite{key-3} VI,4). If $M$ is projective then $\widehat{H}^{m}(G,M)=0$
for all $m\in\mathbb{Z}$, hence $N$ is an isomorphism. We conclude:

\begin{flushleft}
1) Surjectivity of $N$ implies that $x\in P^{G}\Leftrightarrow\exists y\in P,x=Ny$.\\
2) Injectivity of $N$ implies that $y\otimes1=y'\otimes1$ $\Leftrightarrow$
$Ny=Ny'$ for all $y,y'\in P$ .
\par\end{flushleft}\end{proof}
\begin{prop}
\label{pro:Let-G-be}Let G be a finite group then there is an isomorphism
between $\widehat{H}^{-n-1}(G,\mathbb{Z})$ and $H_{n}(G,\mathbb{Z})$
for $n\geq1$.\end{prop}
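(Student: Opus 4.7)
The plan is to exploit the identification $\widehat{H}^{-n-1}(G,\mathbb{Z}) = \underline{Hom}_{\mathbb{Z}[G]}(\mathbb{Z}, \Omega^{n+1}\mathbb{Z})$ obtained just after Proposition 3, and to compute $H_n(G,\mathbb{Z})$ from the same projective resolution of $\mathbb{Z}$ used to define $\Omega^{n+1}\mathbb{Z}$. I would fix such a resolution $\cdots \to P_{n+1} \xrightarrow{d_{n+1}} P_n \xrightarrow{d_n} P_{n-1} \to \cdots \to P_0 \to \mathbb{Z}$, identify $\Omega^{n+1}\mathbb{Z} = \ker d_n \subseteq P_n$, and use the preceding lemma, which converts $G$-invariants into images of the norm and kernels of $P \otimes_{\mathbb{Z}[G]} \mathbb{Z}$ into kernels of $N$, as the bridge between the two sides.

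I would define $\phi : \underline{Hom}_{\mathbb{Z}[G]}(\mathbb{Z}, \Omega^{n+1}\mathbb{Z}) \to H_n(G,\mathbb{Z})$ as follows. Given $f$, the element $f(1)$ lies in $P_n^G$, so by part (1) of the lemma one may write $f(1) = Ny$ for some $y \in P_n$; set $\phi([f]) = [y \otimes 1]$. To see that $y \otimes 1$ is a cycle, one computes $N d_n(y) = d_n(Ny) = d_n(f(1)) = 0$ and invokes part (2) of the lemma applied to $P_{n-1}$. Independence of the choice of $y$ with $Ny = f(1)$ is also immediate from part (2).

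The main technical step, which I expect to be the main obstacle, is showing that $\phi$ vanishes on maps factoring through a projective module. If $f = h \circ g$ with $g\colon \mathbb{Z} \to Q$ and $Q$ projective, then $g(1) = Nz$ yields $f(1) = N h(z)$, so one may take $y = h(z)$. Using projectivity of $Q$ together with the surjection $d_{n+1}\colon P_{n+1} \twoheadrightarrow \Omega^{n+1}\mathbb{Z}$, I would lift $h$ to $\tilde h\colon Q \to P_{n+1}$, giving $y = d_{n+1}(\tilde h(z))$, so $y \otimes 1$ is a boundary. This is the one place where projectivity of the resolution really enters.

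Finally, I would construct the inverse $\psi$ by sending a cycle $y \otimes 1$ (whence $N d_n(y) = 0$, i.e.\ $Ny \in \Omega^{n+1}\mathbb{Z}$) to the class of the map $f(1) = Ny$, and checking that a boundary $y \otimes 1 = d_{n+1}(w) \otimes 1$ produces an $f$ with $f(1) = N d_{n+1}(w) = d_{n+1}(Nw)$, which factors through the projective module $P_{n+1}$. Mutual inverseness then follows from the definitions, and naturality from functoriality in the resolution. The argument would not extend to $n = 0$ because $\Omega\mathbb{Z}$, the augmentation ideal, has no non-zero invariants, consistent with the computation $\widehat{H}^{-1}(G,\mathbb{Z}) = 0$ in the example above.
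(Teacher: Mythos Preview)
Your proposal is correct and follows essentially the same route as the paper: fix a projective resolution, use the norm lemma to pass between invariants in $P_n$ and elements of $P_n\otimes_{\mathbb{Z}[G]}\mathbb{Z}$, and match the quotient by maps factoring through projectives with the quotient by boundaries. The only cosmetic difference is that the paper first builds a surjection $Hom_{\mathbb{Z}[G]}(\mathbb{Z},\Omega^{n+1}\mathbb{Z})\to H_n(G,\mathbb{Z})$ and then identifies its kernel with $PHom$, whereas you check well-definedness on $\underline{Hom}$ first and then write down an explicit inverse; your lifting argument through an arbitrary projective $Q$ is exactly what the paper's ``w.l.o.g.\ $P_{n+1}$'' is shorthand for.
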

\begin{proof}
Take a projective resolution of $\mathbb{Z}$ - $\cdots\rightarrow P_{n}\xrightarrow{d_{n}}P_{n-1}...\rightarrow P_{0}\rightarrow\mathbb{Z}$,
taking the tensor of it with $\mathbb{Z}$ gives us the chain complex
for the homology of $G$ which we denote by $C_{*}(G)$. We define
a map from $Hom_{\mathbb{Z}[G]}(\mathbb{Z},\Omega^{n+1}\mathbb{Z})$
to $C_{n}(G)$ the following way: Given a homomorphism $f:\mathbb{Z}\rightarrow\Omega^{n+1}\mathbb{Z}$,
$f(1)=x$ is an invariant element in $P_{n}$. By the lemma, since
$P_{n}$ is projective and $x$ is invariant there is some $y\in P_{n}$
such that $x=Ny$. We define $\Phi(f)=y\otimes1$. This doesn't depend
on the choice of $y$ since $Ny=Ny'\Leftrightarrow y\otimes1=y'\otimes1$
by the lemma above. We know that $Nd_{n}(y)=d_{n}(Ny)=d_{n}(x)=0$
and by the lemma this implies that $d_{n}(y)\otimes1=0$ ($P_{n-1}$
is projective and here we use the fact that $n\geq1$). We deduce
that $y\otimes1\in Z_{n}(G)$. The map described now $Hom_{\mathbb{Z}[G]}(\mathbb{Z},\Omega^{n+1}\mathbb{Z})\rightarrow Z_{n}(G)$
is surjective since given an element $y\otimes1\in C_{n}(G)$ such
that $d_{n}(y)\otimes1=0$ this implies that $Nd_{n}(y)=0$, so we
define $f(k)=kNy$, this is well defined since $Ny$ is invariant
and in the kernel of $d_{n}$. 

We now have a surjective homomorphism $\Phi:Hom_{\mathbb{Z}[G]}(\mathbb{Z},\Omega^{n+1}\mathbb{Z})\rightarrow H_{n}(G,\mathbb{Z})$.
If $f\in ker(\Phi)$ then there exist $s\in P_{n+1}$ such that $\Phi(f)=y\otimes1=d_{n+1}(s)\otimes1$
then the map $f:\mathbb{Z}\rightarrow\Omega^{n+1}\mathbb{Z}$ factors
through $P_{n+1}$ which is projective by $1\mapsto Ns$. On the other
hand if $f$ factors through a projective module, w.l.o.g. $P_{n+1}$,
then $Ny=f(1)=d_{n+1}(Ns)$ (every invariant element in $P_{n+1}$
is of the form $Ns$ by the lemma). This implies that $Nd_{n+1}(s)=Ny\Leftrightarrow d_{n+1}(s\otimes1)=d_{n+1}(s)\otimes1=y\otimes1$.

We conclude that the induced map: {\small \[
\underline{\Phi}:\widehat{H}^{-n-1}(G,\mathbb{Z})=\underline{Hom}_{\mathbb{Z}[G]}(\mathbb{Z},\Omega^{n+1}\mathbb{Z})\rightarrow H_{n}(G,\mathbb{Z})\]
}is an isomorphism for all $n\geq1$.\end{proof}
\begin{rem*}
Since $\widehat{H}^{-1}(G,\mathbb{Z})=\left\{ 0\right\} $ we conclude
that $\widehat{H}^{-n-1}(G,\mathbb{Z})\cong\tilde{H}_{n}(G,\mathbb{Z})$
for $n\geq0$ where $\tilde{H}_{n}(G,\mathbb{Z})$ is the reduced
homology.
\end{rem*}

\section*{The product structure}

The cup product in Tate cohomology $\widehat{H}^{-n}(G,\mathbb{Z})\otimes\widehat{H}^{-m}(G,\mathbb{Z})\rightarrow\widehat{H}^{-n-m}(G,\mathbb{Z})$
is given by composition (this is also called the Yoneda composition
product). Given: \[
[f]\in\widehat{H}^{-n}(G,\mathbb{Z})=\underline{Hom}_{\mathbb{Z}[G]}(\mathbb{Z},\Omega^{n}\mathbb{Z})\]
\[
[g]\in\widehat{H}^{-m}(G,\mathbb{Z})=\underline{Hom}_{\mathbb{Z}[G]}(\mathbb{Z},\Omega^{m}\mathbb{Z})\cong\underline{Hom}_{\mathbb{Z}[G]}(\Omega^{n}\mathbb{Z},\Omega^{n+m}\mathbb{Z})\]
we compose them to get a map: \[
[f]\cup[g]=[g\circ f]\in\underline{Hom}_{\mathbb{Z}[G]}(\mathbb{Z},\Omega^{n+m}\mathbb{Z})\]
Since for $n,m\geq2$ we have $\widehat{H}^{-n}(G,\mathbb{Z})\cong H_{n-1}(G,\mathbb{Z}),\widehat{H}^{-m}(G,\mathbb{Z})\cong H_{m-1}(G,\mathbb{Z})$
we have a product $H_{n-1}(G,\mathbb{Z})\otimes H_{m-1}(G,\mathbb{Z})\rightarrow H_{n+m-1}(G,\mathbb{Z})$.
We would like to have a description of the isomorphism $\underline{Hom}_{\mathbb{Z}[G]}(\mathbb{Z},\Omega^{m}\mathbb{Z})\cong\underline{Hom}_{\mathbb{Z}[G]}(\Omega^{n}\mathbb{Z},\Omega^{n+m}\mathbb{Z})$
which is concrete. To do so we use the following construction:

\subsection*{The join of augmented chain complexes}

~

Let $G$ be a finite group and let $P$ and $Q$ be the following
augmented chain complexes over $\mathbb{Z}[G]$ - $...\rightarrow P_{2}\rightarrow P_{1}\rightarrow P_{0}\rightarrow\mathbb{Z}$
and $...\rightarrow Q_{2}\rightarrow Q_{1}\rightarrow Q_{0}\rightarrow\mathbb{Z}$.
We define the join of those two chain complexes to be $P\ast Q=\Sigma(P\otimes_{\mathbb{Z}}Q)$
that is the suspension of the tensor product over $\mathbb{Z}$ (with
a diagonal $G$ action). To be more specific $(P\ast Q)_{n}=\underset{0\leq k\leq n+1}{\oplus}P_{k-1}\otimes_{\mathbb{Z}}Q_{n-k}$:

\[
...\to P_{1}\otimes_{\mathbb{Z}}\mathbb{Z}\oplus P_{0}\otimes_{\mathbb{Z}}Q_{0}\oplus\mathbb{Z}\otimes_{\mathbb{Z}}Q_{1}\to P_{0}\otimes_{\mathbb{Z}}\mathbb{Z}\oplus\mathbb{Z}\otimes_{\mathbb{Z}}Q_{0}\to\mathbb{Z}\otimes_{\mathbb{Z}}\mathbb{Z}\]
$P\ast Q$ is an augmented $\mathbb{Z}[G]$ chain complex in a natural
way. 
\begin{lem}
If both $P$ and $Q$ are projective and acyclic augmented $\mathbb{Z}[G]$
chain complexes then $P\ast Q$ is a projective and acyclic augmented
$\mathbb{Z}[G]$ chain complex.\end{lem}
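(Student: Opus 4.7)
I identify $P\ast Q$ with $\Sigma(\tilde P\otimes_{\mathbb{Z}}\tilde Q)$, where $\tilde P$ denotes the chain complex $\cdots\to P_1\to P_0\to\mathbb{Z}\to 0$ with $\mathbb{Z}$ sitting in degree $-1$ and the differential $P_0\to\mathbb{Z}$ given by the augmentation; similarly for $\tilde Q$. Under the conventions $P_{-1}=Q_{-1}=\mathbb{Z}$, the explicit formula in the excerpt,
\[
(P\ast Q)_n=\bigoplus_{0\le k\le n+1}P_{k-1}\otimes_{\mathbb{Z}}Q_{n-k},
\]
is exactly $(\tilde P\otimes_{\mathbb{Z}}\tilde Q)_{n-1}$. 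The hypothesis that $P$ is acyclic as an augmented complex is precisely the vanishing of all homology of $\tilde P$, and similarly for $\tilde Q$.

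\textbf{Acyclicity.} Since $\mathbb{Z}[G]$ is $\mathbb{Z}$-free, every projective $\mathbb{Z}[G]$-module is $\mathbb{Z}$-free, so each term of $\tilde P$ and $\tilde Q$ is a flat $\mathbb{Z}$-module. The K\"unneth formula for $\mathbb{Z}$-modules then yields
\[
H_n(\tilde P\otimes_{\mathbb{Z}}\tilde Q)\cong\bigoplus_{i+j=n}H_i(\tilde P)\otimes H_j(\tilde Q)\,\oplus\,\bigoplus_{i+j=n-1}\mathrm{Tor}_1^{\mathbb{Z}}\bigl(H_i(\tilde P),H_j(\tilde Q)\bigr)=0,
\]
since every homology factor vanishes. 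Shifting by $\Sigma$ preserves acyclicity, so $P\ast Q$ is acyclic.

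\textbf{Projectivity.} In each degree $n\ge 0$, $(P\ast Q)_n$ is a direct sum of modules $P_i\otimes_{\mathbb{Z}}Q_j$ equipped with the diagonal $G$-action. If $i=-1$ the summand collapses to $Q_j$, and if $j=-1$ to $P_i$; both are projective by hypothesis. When $i,j\ge 0$, both factors are projective (hence $\mathbb{Z}$-free), and I must verify that the diagonal tensor product is again $\mathbb{Z}[G]$-projective. It suffices to treat $P_i=\mathbb{Z}[G]$ via the standard untwisting isomorphism
\[
\mathbb{Z}[G]\otimes_{\mathbb{Z}}M\;\longrightarrow\;\mathbb{Z}[G]\otimes_{\mathbb{Z}}M^{\mathrm{triv}},\qquad g\otimes m\mapsto g\otimes g^{-1}m,
\]
which is $\mathbb{Z}[G]$-linear with respect to the diagonal action on the source and the action on the left factor alone on the target; the target is then $\mathbb{Z}[G]$-free on any $\mathbb{Z}$-basis of $M$. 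Writing an arbitrary projective $P_i$ as a direct summand of a free $\mathbb{Z}[G]$-module, and using that $\otimes_{\mathbb{Z}}$ commutes with direct sums and summands, completes the argument.

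\textbf{Main obstacle.} Neither step is deep; the only subtlety worth flagging is the verification that the untwisting isomorphism is genuinely $\mathbb{Z}[G]$-linear for the diagonal action on its source, which is the single point where the group action is really exploited. Finiteness of $G$ plays no essential role in this particular lemma; the argument applies to arbitrary discrete groups.
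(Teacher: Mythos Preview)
Your argument is correct and follows the same route as the paper: acyclicity via the K\"unneth theorem applied to the (augmented) tensor product over $\mathbb{Z}$, and projectivity of $(P\ast Q)_n$ by checking each summand $P_{k-1}\otimes_{\mathbb{Z}}Q_{n-k}$. The paper's proof is a two-line sketch that simply asserts the latter; your untwisting isomorphism supplies the justification the paper leaves implicit, and your remark that finiteness of $G$ is irrelevant here is accurate.
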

\begin{proof}
$P$ and $Q$ are projective acyclic chain complexes over $\mathbb{Z}$
so the same is true for their tensor product, by the $K\ddot{u}nneth$
formula. $(P\ast Q)_{n}$ is projective over $\mathbb{Z}[G]$ for
$n\geq0$ since each of the modules $P_{k-1}\otimes_{\mathbb{Z}}Q_{n-k}$
is projective.\end{proof}
\begin{lem}
Let $P$ and $Q$ be \textup{two resolutions of }$\mathbb{Z}$ over
$\mathbb{Z}[G]$, and let $s\in Q_{n-1}$ be an element, $n>1$. Define
a map $s_{*}:P_{k-1}\rightarrow(P\ast Q)_{k+n-1}$ by $s_{\ast}(x)=x\otimes s$
called the join with $s$. Then we have:\\
1) $s_{*}$ is a group homomorphism.\\
2) If $s$ is $G$-invariant then $s_{*}$ is a homomorphism over
$\mathbb{Z}[G]$.\\
3) If $s\in ker(Q_{n-1}\rightarrow Q_{n-2})$ then $s_{*}$ is
a chain map of degree $n$.\end{lem}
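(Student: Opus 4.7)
The plan is to verify the three claims in sequence; each reduces to a short bookkeeping computation once one unpacks the definition of $P\ast Q = \Sigma(P\otimes_{\mathbb{Z}}Q)$. The only preliminary check is that the target is correct: an element $x\otimes s$ with $x\in P_{k-1}$ and $s\in Q_{n-1}$ lies in the summand $P_{k-1}\otimes_{\mathbb{Z}}Q_{n-1}$ of $(P\ast Q)_{k+n-1}$, as is seen by taking $j=k$ in the formula $(P\ast Q)_{m} = \bigoplus_{j}P_{j-1}\otimes_{\mathbb{Z}}Q_{m-j}$ with $m=k+n-1$. So the degree shift is indeed $n$.

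For (1), the map $x\mapsto x\otimes s$ is additive in $x$ simply because the tensor product over $\mathbb{Z}$ is bilinear and $s$ is held fixed. No further argument is needed.

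For (2), I would use that the $G$-action on $P\ast Q$ is the diagonal action on the tensor factors. For $g\in G$ and $x\in P_{k-1}$,
\[
g\cdot s_{\ast}(x) \;=\; g\cdot(x\otimes s) \;=\; (gx)\otimes(gs) \;=\; (gx)\otimes s \;=\; s_{\ast}(gx),
\]
where the third equality uses the assumed $G$-invariance of $s$.

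For (3), the differential on $P\otimes_{\mathbb{Z}}Q$ (and hence on its suspension $P\ast Q$) obeys the graded Leibniz rule, so
\[
d(x\otimes s) \;=\; dx\otimes s \;+\; (-1)^{k-1}\,x\otimes ds.
\]
If $s\in\ker(Q_{n-1}\to Q_{n-2})$, the second term vanishes and we are left with $d(s_{\ast}(x)) = dx\otimes s = s_{\ast}(dx)$, which, combined with the degree shift verified above, is exactly the assertion that $s_{\ast}$ is a chain map of degree $n$. I do not foresee any real obstacle; the only point that demands minor care is keeping the suspension-shift index $k-1$ straight so that the Leibniz sign $(-1)^{k-1}$ is harmless once $ds=0$.
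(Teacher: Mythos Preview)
Your proof is correct and follows essentially the same approach as the paper: bilinearity for (1), the diagonal action computation for (2), and the Leibniz rule with $\partial s=0$ for (3). The paper writes the sign as $(-1)^{|x|+1}$ rather than your $(-1)^{k-1}$, but since that term vanishes under the hypothesis the discrepancy is immaterial; your extra preliminary check that $x\otimes s$ lands in degree $k+n-1$ is a welcome clarification.
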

\begin{proof}
1) Follows from the properties of the tensor product. \\
2) For every $g\in G$ we have: $g(s_{*}(x))=g(x\otimes s)=g(x)\otimes g(s)=g(x)\otimes s=s_{*}(g(x))$\\
3) $\partial(s_{*}(x))=\partial(x\otimes s)=\partial(x)\otimes s+(-1)^{\left|x\right|+1}x\otimes\partial s=\partial(x)\otimes s=s_{*}(\partial(x))$
\end{proof}
This implies the following:
\begin{thm}
Let $n,m>0$, the product $\widehat{H}^{-n}(G,\mathbb{Z})\otimes\widehat{H}^{-m}(G,\mathbb{Z})\rightarrow\widehat{H}^{-n-m}(G,\mathbb{Z})$
is given by $[f]\cup[g]=[f\ast g]$ where $(f\ast g)(k)=k\cdot f(1)\otimes g(1)\in\Omega_{P\ast P}^{m+n}\mathbb{Z}$
and $k\in\mathbb{Z}$.\end{thm}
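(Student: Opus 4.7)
I will realize the Yoneda product $[f]\cup[g]$ by an explicit chain map valued in the auxiliary resolution $P\ast P$, and then read off the formula for $f\ast g$. First I check that $f\ast g$ is well-defined. Since $f(1)\in\Omega^{n}\mathbb{Z}\subset P_{n-1}$ and $g(1)\in\Omega^{m}\mathbb{Z}\subset P_{m-1}$ are both $G$-invariant, the element $f(1)\otimes g(1)$ lies in $P_{n-1}\otimes_{\mathbb{Z}}P_{m-1}\subset(P\ast P)_{n+m-1}$ and is invariant under the diagonal $G$-action. The Leibniz rule gives
\[
\partial\bigl(f(1)\otimes g(1)\bigr)=\partial f(1)\otimes g(1)\pm f(1)\otimes\partial g(1)=0,
\]
so $f\ast g$ is a well-defined $\mathbb{Z}[G]$-homomorphism $\mathbb{Z}\to\Omega_{P\ast P}^{n+m}\mathbb{Z}$. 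By the first lemma of this subsection $P\ast P$ is itself a projective resolution of $\mathbb{Z}$, hence $\underline{Hom}_{\mathbb{Z}[G]}(\mathbb{Z},\Omega_{P\ast P}^{n+m}\mathbb{Z})$ canonically represents $\widehat{H}^{-n-m}(G,\mathbb{Z})$.

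Next I apply the preceding lemma with $s=g(1)$ and $Q=P$: since $g(1)$ is $G$-invariant and a cycle, the join construction produces a $\mathbb{Z}[G]$-chain map $s_{\ast}:P\to P\ast P$ of degree $m$ given by $s_{\ast}(x)=x\otimes g(1)$. Restriction to $n$th syzygies yields $\hat{g}:\Omega^{n}\mathbb{Z}\to\Omega_{P\ast P}^{n+m}\mathbb{Z}$, and by construction
\[
(f\ast g)(k)=k\cdot f(1)\otimes g(1)=s_{\ast}\bigl(k\,f(1)\bigr)=\hat{g}\bigl(f(k)\bigr),
\]
so $f\ast g=\hat{g}\circ f$. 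It then remains to show that $[\hat{g}]$ equals the image $\Psi^{n}[g]$ under the canonical stable isomorphism $\underline{Hom}(\mathbb{Z},\Omega^{m}\mathbb{Z})\cong\underline{Hom}(\Omega^{n}\mathbb{Z},\Omega^{n+m}\mathbb{Z})$; granting this, $[f\ast g]=[\hat{g}\circ f]=[\Psi^{n}(g)\circ f]=[f]\cup[g]$.

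The main obstacle is this last identification. From the appendix, $\Psi^{n}[g]$ is represented, up to maps factoring through projectives, by the restriction to $\Omega^{n}\mathbb{Z}$ of any $\mathbb{Z}[G]$-chain map of degree $m$ between projective resolutions of $\mathbb{Z}$ that recovers $g$ at the bottom; such a lift is unique up to chain homotopy and so yields a well-defined stable class. The join map $s_{\ast}:P\to P\ast P$, combined with the canonical chain equivalence $P\ast P\to P$, is precisely such a lift, so $[\hat{g}]=\Psi^{n}[g]$ after the canonical identification of $\Omega_{P\ast P}^{n+m}\mathbb{Z}$ with $\Omega_{P}^{n+m}\mathbb{Z}$ in the stable module category. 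The delicate part is making precise this reconciliation across the two resolutions, which is handled by the uniqueness-up-to-homotopy machinery developed in the appendix.
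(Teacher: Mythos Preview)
Your proposal is correct and follows essentially the same route as the paper: both invoke the preceding lemma to produce the degree-$m$ $\mathbb{Z}[G]$-chain map $s_\ast:P\to P\ast P$, $x\mapsto x\otimes g(1)$, observe that it realizes the stable isomorphism $\underline{Hom}(\mathbb{Z},\Omega^{m}\mathbb{Z})\cong\underline{Hom}(\Omega^{n}\mathbb{Z},\Omega^{n+m}\mathbb{Z})$, and conclude that the Yoneda composite sends $1$ to $f(1)\otimes g(1)$. You spell out more carefully why $s_\ast$ is a valid lift of $g$ in the sense of the appendix (the paper simply asserts ``this gives us a concrete construction of the isomorphism''), but the substance is identical.
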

\begin{proof}
Take a projective resolution $P$ for $\mathbb{Z}$ over $\mathbb{Z}[G]$.
Let $[f]\in\widehat{H}^{-n}(G,\mathbb{Z})=\underline{Hom}_{\mathbb{Z}[G]}(\mathbb{Z},\Omega^{n}\mathbb{Z})$,
$[g]\in\widehat{H}^{-m}(G,\mathbb{Z})=\underline{Hom}_{\mathbb{Z}[G]}(\mathbb{Z},\Omega^{m}\mathbb{Z})\cong\underline{Hom}_{\mathbb{Z}[G]}(\Omega^{n}\mathbb{Z},\Omega^{n+m}\mathbb{Z})$.
Choose representatives $f,g$ and define a degree $m$ map $P\rightarrow P\ast P$
by $x\mapsto x\otimes g(1)$. Since $g(1)$ is invariant and in the
kernel this map is a chain map of $\mathbb{Z}[G]$ chain complexes
of degree $m$. This gives us a concrete construction of the isomorphism
$\underline{Hom}_{\mathbb{Z}[G]}(\mathbb{Z},\Omega^{m}\mathbb{Z})\cong\underline{Hom}_{\mathbb{Z}[G]}(\Omega^{n}\mathbb{Z},\Omega^{n+m}\mathbb{Z})$.
The composition is therefore $g\circ f(1)=f(1)\otimes g(1)$.
\end{proof}

\subsection*{A description of the product by joins of cycles}

~

We now consider resolutions which come from singular chains of spaces.
Let $G$ be a finite group, recall that a contractible $G-CW$ complex
with a free $G$ action is denoted by $EG$, the quotient space $EG/G$
is the classifying space of principal $G$ bundles and is denoted
by $BG$.

We consider now the augmented singular chain complex of $EG$ denoted
by $C_{*}(EG)$. The action on $EG$ is free so $C_{*}(EG)$ is projective
($n\geq0$) and $EG$ is contractible so $C_{*}(EG)$ is acyclic.

As we saw before (prop. \ref{pro:Let-G-be}) every element of $H_{n}(G,\mathbb{Z})$
can be considered as an invariant cycle in $C_{n}(EG)$ (modulo invariant
boundary), we will show that the product can be considered as the
join of the two such cycles, which is naturally an invariant cycle
in $C_{*}(EG\ast EG)$ where $EG\ast EG$ is the join of two copies
of $EG$. Note that since the join of contractible spaces is contractible,
$EG\ast EG$ is contractible, and it has a natural $G$ action, given
by $g(x,y,t)=(gx,gy,t)$, which is free since it is free on both copies
of $EG$. This implies that its augmented singular chain complex is
a projective resolution of $\mathbb{Z}$ over $\mathbb{Z}[G]$.

We now associate the join of chain complexes to the join of spaces.
\begin{lem}
Let $A$ and $B$ be two spaces and let $C_{*}(A)$ and $C_{*}(B)$
be their augmented (!) singular chain complexes, then there is a natural
chain map: \[
h:C_{*}(A)\ast C_{*}(B)\rightarrow C_{*}(A\ast B)\]
If $G$ acts on $A$ and $B$ then it also acts on $A\ast B$ and
the chain complexes are complexes over $\mathbb{Z}[G]$ and $h$ is
a map of $\mathbb{Z}[G]$ chain complexes.\end{lem}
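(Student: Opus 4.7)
The plan is to define $h$ on singular simplices by joining them pointwise. Fix the canonical affine identification $\Delta^{p+q+1}\cong \Delta^{p}\ast\Delta^{q}$ under which the first $p+1$ barycentric coordinates parametrize the $\Delta^{p}$-factor and the last $q+1$ parametrize the $\Delta^{q}$-factor. Given singular simplices $\sigma\colon\Delta^{p}\to A$ and $\tau\colon\Delta^{q}\to B$ with $p,q\geq 0$, let $\sigma\ast\tau\colon\Delta^{p+q+1}\to A\ast B$ be the resulting join of maps, and set $h(\sigma\otimes\tau)=\sigma\ast\tau$. On the augmented summands set $h(1\otimes\tau)=\iota_{B}\circ\tau$ and $h(\sigma\otimes 1)=\iota_{A}\circ\sigma$, where $\iota_{A},\iota_{B}$ are the canonical inclusions of $A,B$ into $A\ast B$, and set $h(1\otimes 1)=1\in C_{-1}(A\ast B)$. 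Extending linearly gives $h$ on each summand of $(C_{*}(A)\ast C_{*}(B))_{n}$.

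The heart of the verification is the chain-map identity $\partial h=h\partial$. I would compute directly that the faces of $\sigma\ast\tau$ split into two groups: omitting a vertex $v_{i}$ of the $\Delta^{p}$-factor for $0\leq i\leq p$ yields $(d_{i}\sigma)\ast\tau$, which appears with face index $i$, while omitting a vertex $w_{j}$ of the $\Delta^{q}$-factor for $0\leq j\leq q$ yields $\sigma\ast(d_{j}\tau)$ with face index $p+1+j$. Taking the alternating sum gives
\[
\partial(\sigma\ast\tau)=(\partial\sigma)\ast\tau+(-1)^{p+1}\sigma\ast(\partial\tau),
\]
which matches exactly the suspension-differential formula recorded in the previous lemma. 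The boundary cases $p=0$ or $q=0$ are then handled uniformly by interpreting the augmented boundary of a $0$-simplex as $1\in C_{-1}$ together with the identifications $1\ast\tau=\iota_{B}\circ\tau$ and $\sigma\ast 1=\iota_{A}\circ\sigma$; the degenerate case $p=q=-1$ reduces to $h(1\otimes 1)=1$ having zero boundary on both sides.

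Naturality is immediate: a continuous map $f\colon A\to A'$ induces $f\ast\mathrm{id}\colon A\ast B\to A'\ast B$ satisfying $(f\circ\sigma)\ast\tau=(f\ast\mathrm{id})\circ(\sigma\ast\tau)$, and likewise in the second variable. For $G$-equivariance, the diagonal action on $A\ast B$ satisfies $g\cdot(\sigma\ast\tau)=(g\sigma)\ast(g\tau)$, so $h(g(\sigma\otimes\tau))=g\cdot h(\sigma\otimes\tau)$ on the top summand; the augmented summands are handled analogously using that $\iota_{A},\iota_{B}$ are $G$-equivariant. I expect the only real obstacle to be the sign bookkeeping and the careful handling of the augmented edge summands ($k=0$ and $k=n+1$), but with the conventions above everything aligns with the suspension-differential formula from the earlier lemma.
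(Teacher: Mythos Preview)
Your proposal is correct and follows essentially the same approach as the paper: define $h$ via the simplicial join $\sigma\ast\tau$ using the identification $\Delta^{p+q+1}\cong\Delta^{p}\ast\Delta^{q}$, handle the augmented summands by the inclusions $\iota_{A},\iota_{B}$, verify the boundary formula $\partial(\sigma\ast\tau)=(\partial\sigma)\ast\tau+(-1)^{p+1}\sigma\ast(\partial\tau)$, and check $G$-equivariance directly. The only cosmetic difference is that the paper treats the $p=0$ case separately (as a cone) and then observes that the augmentation makes the formula hold, whereas you fold this into a uniform statement by declaring $1\ast\tau=\iota_{B}\circ\tau$ from the outset; both amount to the same verification.
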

\begin{proof}
We first note that for $n,m\geq0$, for every two singular simplices
$\sigma\in C_{n}(A)$ and $\tau\in C_{m}(B)$ there is a canonical
singular chain $\sigma\ast\tau\in C_{n+m+1}(A\ast B)$ and this definition
can be extended in a bilinear way to chains. Define $h$ the following
way:\\
Given an element $s\otimes t\in C_{n}(A)\otimes C_{m}(B)$, if
$n,m\geq0$ then $h(s\otimes t)=s\ast t$, else $n=-1$ (or $m=-1$)
then $s$ is an integer, denote it by $k$ then $h(s\otimes t)=h(k\otimes t)=k\cdot t$
where $t$ is the chain induced by the inclusion of $B$ in $A\ast B$
(and similarly for $m=-1$).

We have to show that $h$ is a chain map. For two simplices of positive
(!) dimension we have the formula $\partial(\sigma\ast\tau)=\partial(\sigma)\ast\tau+(-1)^{dim(\sigma)+1}\sigma\ast\partial(\tau)$.
The formula extends to chains, so we have:\\
{\small $\partial h(s\otimes t)=\partial(s\ast t)=\partial(s)\ast t+(-1)^{\left|s\right|+1}s\ast\partial(t)=h(\partial(s)\otimes t+(-1)^{\left|s\right|+1}s\otimes\partial(t))=h(\partial(s\otimes t))$.}\\
For $\sigma$, a simplex of dimension $0$ (a point), $\sigma\ast\tau$
is the cone over $\tau$ and its boundary is given by $\partial(\sigma\ast\tau)=\tau+(-1)^{dim(\sigma)+1}\sigma\ast\partial(\tau)$.
Since the boundary map $C_{0}(A)\rightarrow\mathbb{Z}$ is the augmentation
map we see indeed that also in this case $h$ commutes with the boundary
(with respect to the way we have defined $h(k\otimes t)$).

The boundary formula is not (!) true when one of the simplices is
zero dimensional due to the non symmetric way we define the faces
of a zero simplex (the $n$ simplex has $n+1$ faces while the zero
simplex has no faces). If we wanted to be consistent with the boundaries
of the higher simplices we should have used only augmented chain complexes.
More in this direction appears in \cite{key-6}.

When there is a $G$ action on both spaces then clearly all the complexes
are complexes over $\mathbb{Z}[G]$. $h$ is a $\mathbb{Z}[G]$ chain
map since for every $g\in G$ we have: \[
h(g(s\otimes t))=h(gs\otimes gt))=gs\ast gt=g(s\ast t)=g(h(s\otimes t))\]
\end{proof}
\begin{thm}
The cup product in negative Tate cohomology gives a product \[
H_{n}(G,\mathbb{Z})\otimes H_{m}(G,\mathbb{Z})\rightarrow H_{n+m+1}(G,\mathbb{Z})\]
 ($n,m>0$). Each homology class in $H_{n}(G,\mathbb{Z})$ is represented
by an invariant cycle in $EG$. The product of two classes is given
by the join of those cycles, which is an invariant cycle in $EG\ast EG$.\end{thm}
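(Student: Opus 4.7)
The plan is to trace three previously-established identifications simultaneously: the Tate-to-homology isomorphism of Proposition \ref{pro:Let-G-be}, the algebraic formula $[f]\cup[g]=[f\ast g]$ from the preceding theorem, and the chain map $h\colon P\ast P\to C_{*}(EG\ast EG)$ comparing the algebraic join of the resolution $P:=C_{*}(EG)$ with the singular chains of the topological join. Because $EG\ast EG$ is contractible with a free $G$-action, $C_{*}(EG\ast EG)$ is itself a projective resolution of $\mathbb{Z}$ over $\mathbb{Z}[G]$; hence $h$ is a $\mathbb{Z}[G]$-chain map between projective resolutions lifting the identity on $\mathbb{Z}$, so it may be used to transport Tate classes between the two resolutions.

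First I would translate Proposition \ref{pro:Let-G-be} into topological language: starting from $\alpha\in H_{n}(G,\mathbb{Z})$, lift a cycle representative to $\tilde{\alpha}\in C_{n}(EG)$; then $x:=N\tilde{\alpha}$ is an invariant cycle (since $\partial\tilde{\alpha}\otimes 1=0$ gives $N\partial\tilde{\alpha}=0=\partial N\tilde{\alpha}$), and the Tate class matching $\alpha$ is represented by $f\colon\mathbb{Z}\to\Omega^{n+1}\mathbb{Z}$ with $f(1)=x$. Do the same for $\beta\in H_{m}(G,\mathbb{Z})$, producing $x':=N\tilde{\beta}$ and $g(1)=x'$. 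By the preceding theorem the Yoneda product is computed by $(f\ast g)(1)=x\otimes x'\in(P\ast P)_{n+m+1}$.

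Next I would push this representative through $h$ to get $x\ast x'\in C_{n+m+1}(EG\ast EG)$, which is an invariant cycle — this is, by construction, the topological join of the two invariant-cycle representatives of $\alpha$ and $\beta$. To read off the resulting class in $H_{n+m+1}(G,\mathbb{Z})$ via Proposition \ref{pro:Let-G-be} applied inside the resolution $C_{*}(EG\ast EG)$, I must write $x\ast x'=Nw$. Using that $N\tilde{\beta}$ is $G$-invariant and that the join is $G$-equivariant,
\[
N\tilde{\alpha}\ast N\tilde{\beta}\;=\;\sum_{g\in G}(g\tilde{\alpha})\ast N\tilde{\beta}\;=\;\sum_{g\in G}g\bigl(\tilde{\alpha}\ast N\tilde{\beta}\bigr)\;=\;N\bigl(\tilde{\alpha}\ast N\tilde{\beta}\bigr),
\]
so $w=\tilde{\alpha}\ast N\tilde{\beta}$ works, and the corresponding homology class in $H_{n+m+1}(BG,\mathbb{Z})$ is $[(\tilde{\alpha}\ast N\tilde{\beta})\otimes 1]$. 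By the correspondence of Proposition \ref{pro:Let-G-be} this class is represented by the invariant cycle $N\tilde{\alpha}\ast N\tilde{\beta}$, which is precisely the join of the two invariant cycles chosen for $\alpha$ and $\beta$.

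The main difficulty I anticipate is not any single hard estimate but the bookkeeping of the three identifications at once: one must check that transporting the algebraic class $x\otimes x'$ through $h$ yields the right Tate class in the new resolution, and that the norm-lift $w$ chosen via the equivariance computation above is actually forced (up to invariant boundaries) rather than a lucky choice. The essential algebraic ingredient is the identity $Ny\ast Nz=N(y\ast Nz)$ displayed above, which is what converts the Tate representative $Ny\otimes Nz$ produced by the cup product formula into the form $Nw$ demanded by Proposition \ref{pro:Let-G-be}, and simultaneously shows that the invariant-cycle picture of the product is exactly the join of the invariant-cycle pictures of the factors.
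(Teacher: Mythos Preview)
Your proposal is correct and follows essentially the same route as the paper: invoke the formula $[f]\cup[g]=[f\ast g]$ from the preceding theorem, then transport via the chain map $h\colon C_{*}(EG)\ast C_{*}(EG)\to C_{*}(EG\ast EG)$ to land on the topological join of the invariant cycles. The paper's proof is considerably terser and stops at that point; your extra norm bookkeeping $N\tilde{\alpha}\ast N\tilde{\beta}=N(\tilde{\alpha}\ast N\tilde{\beta})$ is precisely the content of the Corollary the paper records immediately afterwards (where the formula $(x\otimes1)\otimes(y\otimes1)\mapsto (Nx\otimes y)\otimes1$ appears).
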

\begin{proof}
We already saw that the product can be described by the join of resolutions.
By the proposition above there is a degree zero chain map $C_{*}(EG)\ast C_{*}(EG)\rightarrow C_{*}(EG\ast EG)$.
The image of $f(1)\otimes g(1)$ under this map is the join of $f(1)$
with $g(1)$. This gives a more concrete model where the cycles are
actual invariant singular cycles of the space $EG\ast EG$.\end{proof}
\begin{cor}
The product in $\tilde{H}_{*}(G,\mathbb{Z})$ comes from the chain
map: \[
\left(P\otimes_{\mathbb{Z}[G]}\mathbb{Z}\right)\otimes_{\mathbb{Z}}\left(P\otimes_{\mathbb{Z}[G]}\mathbb{Z}\right)\to\left(P*P\right)\otimes_{\mathbb{Z}[G]}\mathbb{Z}\]
given by: $(x\otimes1)\otimes(y\otimes1)\to\left((Nx)\otimes y\right)\otimes1$,
where $P$ is an augmented projective resolution.
\end{cor}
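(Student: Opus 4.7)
The plan is to combine the chain-level description of the Yoneda product from the previous theorem with the explicit isomorphism
\[
\Phi\colon \underline{Hom}_{\mathbb{Z}[G]}(\mathbb{Z},\Omega^{n+1}\mathbb{Z}) \to H_n(G,\mathbb{Z})
\]
constructed in Proposition \ref{pro:Let-G-be}, and simply read off the formula. Take an augmented projective resolution $P$ of $\mathbb{Z}$ over $\mathbb{Z}[G]$ and two homology classes represented by cycles $x\otimes 1 \in P_n\otimes_{\mathbb{Z}[G]}\mathbb{Z}$ and $y\otimes 1 \in P_m\otimes_{\mathbb{Z}[G]}\mathbb{Z}$.

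First I would trace back through $\Phi^{-1}$: by the construction in Proposition \ref{pro:Let-G-be}, these classes correspond to the $\mathbb{Z}[G]$-maps $f\colon \mathbb{Z}\to \Omega^{n+1}\mathbb{Z}$ and $g\colon \mathbb{Z}\to \Omega^{m+1}\mathbb{Z}$ with $f(1)=Nx$ and $g(1)=Ny$, the invariant elements in $P_n$ and $P_m$ produced via the norm map. The theorem above then gives a representative of the cup product as the $\mathbb{Z}[G]$-map $f\ast g \colon \mathbb{Z} \to \Omega_{P\ast P}^{n+m+2}\mathbb{Z}$ with
\[
(f\ast g)(1) = f(1)\otimes g(1) = Nx\otimes Ny \in (P\ast P)_{n+m+1},
\]
an invariant element.

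To apply $\Phi$ once more and identify the resulting class in $H_{n+m+1}(G,\mathbb{Z})$, I need to express this invariant element in the form $Nz$ for some $z\in (P\ast P)_{n+m+1}$. The key algebraic identity is
\[
N(Nx\otimes y) = \sum_{g\in G} g(Nx)\otimes gy = \sum_{g\in G} Nx\otimes gy = Nx\otimes Ny,
\]
which uses only the $G$-invariance of $Nx$ and the fact that the $G$-action on the tensor product (over $\mathbb{Z}$) is diagonal. Hence one may take $z = Nx\otimes y$, and by definition of $\Phi$ the product class is represented by $(Nx\otimes y)\otimes 1 \in (P\ast P)\otimes_{\mathbb{Z}[G]}\mathbb{Z}$, which is exactly the formula claimed.

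The only remaining point — and the mildest possible obstacle — is well-definedness: that the assignment $(x\otimes 1)\otimes(y\otimes 1)\mapsto (Nx\otimes y)\otimes 1$ descends to homology and is independent of the choice of cycle representatives. This is automatic, since the formula has been derived by composing two well-defined maps (the cup product and $\Phi$), but it can also be checked directly by verifying at the chain level that the assignment is a chain map of the relevant complexes, using the same Koszul-type boundary computation as in the join lemma. No further argument is needed.
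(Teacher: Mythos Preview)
Your proof is correct and follows exactly the route the paper intends: the corollary is stated without a separate proof because it is meant to be read off directly from the preceding theorem together with the isomorphism $\Phi$ of Proposition~\ref{pro:Let-G-be}, and you have carried out precisely that reading, including the key identity $N(Nx\otimes y)=Nx\otimes Ny$. The paper's subsequent paragraph (factoring the map as an exterior product followed by a transfer) is an additional remark rather than a proof, so there is no divergence in approach.
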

This map is equal to the composition of two maps. The first one:

\[
\left(P\otimes_{\mathbb{Z}[G]}\mathbb{Z}\right)\otimes_{\mathbb{Z}}\left(P\otimes_{\mathbb{Z}[G]}\mathbb{Z}\right)\to\left(P*P\right)\otimes_{\mathbb{Z}[G\times G]}\mathbb{Z}\]
is given by $(x\otimes1)\otimes(y\otimes1)\to\left(x\otimes y\right)\otimes1$.
This is an exterior product, which is injective (in homology) by the
$K\ddot{u}nneth$ theorem. Note that the homology of $\left(P*P\right)\otimes_{\mathbb{Z}[G\times G]}\mathbb{Z}$
need not be equal to $\tilde{H}_{*}(G\times G,\mathbb{Z})$ since
$P*P$ is not projective over $\mathbb{Z}[G\times G]$. $\left(P*P\right)\otimes_{\mathbb{Z}[G\times G]}\mathbb{Z}$
is the chain complex of $BG*BG$, and this is the join product: \[
\tilde{H}_{n}(BG,\mathbb{Z})\otimes\tilde{H}_{m}(BG,\mathbb{Z})\to\tilde{H}_{m+n+1}(BG*BG,\mathbb{Z})\]
The second map, $\left(P*P\right)\otimes_{\mathbb{Z}[G\times G]}\mathbb{Z}\to\left(P*P\right)\otimes_{\mathbb{Z}[G]}\mathbb{Z}$
is given by $\left(x\otimes y\right)\otimes1\to\left((Nx)\otimes y\right)\otimes1$,
which is a transfer map.

\section*{Comparing Kreck's product and the cup product in Tate cohomology}

The Kreck product is defined using stratifolds and stratifold homology.
Stratifolds are generalization of manifolds. They were introduced
by Kreck \cite{K} and used in order to define a bordism theory, denoted
by $SH_{*}$, which is naturally isomorphic to singular homology.
We will use them to describe group homology with integral coefficients
and the Kreck product.

\subsection*{Stratifolds}

~

Kreck defined stratifolds as spaces with a sheaf of functions, called
the smooth functions, fulfilling certain properties but for our purpose
the following definition is enough (these stratifolds are also called
p-stratifolds):

A stratifold is a pair consisting of a topological space and a subsheaf
of the sheaf of real continuous functions, which is constructed inductively
in a similar way to the way we construct $CW$ complexes. We start
with a discrete set of points denoted by $X^{0}$ and define inductively
the set of smooth functions which in the case of $X^{0}$ are all
real functions. 

Suppose $X^{k-1}$ together with a smooth set of functions is given.
Let $W$ be a $n$ dimensional smooth manifold {}``the $n$ strata''
with boundary and a collar $c$, and $f$ a continuous map from the
boundary of $W$ to $X^{n-1}$. We require that $f$ is smooth, which
means that its composition with every smooth map from $X^{n-1}$ is
smooth. Define $X^{n}=X^{n-1}\cup_{f}W$. The smooth maps on $X^{n}$
are defined to be those maps $g:X^{n}\to\mathbb{R}$ which are smooth
when restricted to $X^{n-1}$ and to $W$ and such that for some $0<\delta$
we have $gc(x,t)=gf(x)$ for all $x\in\partial W$ and $t<\delta$.

Among the examples of stratifolds are manifolds, real and complex
algebraic varieties \cite{Gri}, and the one point compactification
of a smooth manifold. The cone over a stratifold and the product of
two stratifolds are again stratifolds. 

We can also define stratifolds with boundary which are analogous to
manifolds with boundary. A main difference is that every stratifold
is the boundary of its cone, which is a stratifold with boundary. 

Given two stratifolds with boundary $(T',S')$ and $(T'',S'')$ and
an isomorphism $f:S'\rightarrow S''$ there is a well defined stratifold
structure on the space $T'\cup_{f}T''$ which is called the gluing.
On the other hand, given a smooth map $g:T\rightarrow\mathbb{R}$
such that there is a neighborhood of $0$ which consists only of regular
values then the preimages $g^{-1}((-\infty,0])=T'$ and $g^{-1}([0,\infty))=T''$
are stratifolds with boundary and $T$ is isomorphic to the gluing
$T'\cup_{Id}T''$.

To obtain singular homology we specialize our stratifolds in the following
way: We use compact stratifolds, require that their top stratum will
be oriented and the codimension one stratum will be empty.
\begin{rem*}
Regarding regularity, a condition often required, see \cite{K3}.
\end{rem*}

\subsection*{Stratifold homology}

~

Stratifold homology was defined by Kreck in \cite{K}. We will describe
here a variant of this theory called parametrized stratifold homology,
which is naturally isomorphic to it for $CW$ complexes. In this paper
we will refer to parametrized stratifold homology just as stratifold
homology and use the same notation for it. 

Stratifold homology is a homology theory, denoted by $SH_{*}$. It
is naturally isomorphic to integral homology and gives a new geometric
point of view on it.
\begin{defn}
Let $X$ be a topological space and $n\geq0$, define $SH_{n}(X)$
to be $\left\{ g:S\rightarrow X\right\} /\sim$ i.e., bordism classes
of maps $g:S\rightarrow X$ where $S$ is a compact oriented stratifold
of dimension $n$ and $g$ is a continuous map. We often denote the
class $[g:S\rightarrow X]$ by $[S,g]$ or by $[S\rightarrow X]$.
$SH_{n}(X)$ has a natural structure of an Abelian group, where addition
is given by disjoint union of maps and the inverse is given by reversing
the orientation. If $f:X\rightarrow Y$ is a continuous map than we
can define an induced map by composition $f_{*}:SH_{n}(X)\rightarrow SH_{n}(Y)$. 
\end{defn}
One constructs a boundary operator and prove the following:
\begin{thm}
(Mayer-Vietoris) The following sequence is exact:\[
...\rightarrow SH_{n}(U\cap V)\rightarrow SH_{n}(U)\oplus SH_{n}(V)\rightarrow SH_{n}(U\cup V)\xrightarrow{\partial}SH_{n-1}(U\cap V)\rightarrow...\]
where, as usual, the first map is induced by inclusions and the second
is the difference of the maps induced by inclusions. 
\end{thm}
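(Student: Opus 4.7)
My plan is to follow the classical bordism-theoretic proof of Mayer--Vietoris, adapted to stratifolds using the splitting-along-a-regular-value mechanism stated earlier in the excerpt. Functoriality of $SH_*$ makes the two inclusion-induced maps and the vanishing of the composition $SH_n(U\cap V)\to SH_n(U)\oplus SH_n(V)\to SH_n(U\cup V)$ automatic, so the bulk of the work is to construct $\partial$ and to verify exactness at each of the three positions.

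For the boundary operator $\partial\colon SH_n(U\cup V)\to SH_{n-1}(U\cap V)$, I would represent a class by $g\colon S\to U\cup V$ and produce a smooth function $\rho\colon S\to\mathbb{R}$ with $\rho\equiv -1$ on the closed subset $S\setminus g^{-1}(V)$ and $\rho\equiv 1$ on $S\setminus g^{-1}(U)$, obtained from a smooth partition of unity subordinate to the open cover $\{g^{-1}(U),g^{-1}(V)\}$ of $S$. A Sard-type argument applied stratum by stratum yields a regular value $t\in(-1,1)$, and the splitting recalled in the excerpt shows that $T=\rho^{-1}(t)$ is a compact oriented $(n-1)$-dimensional stratifold with $g(T)\subset U\cap V$. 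Set $\partial[S,g]=[T,g|_T]$. Independence of $\rho$, $t$, and the choice of representative is checked by interpolating two admissible choices on $S\times[0,1]$ (or on an actual bordism between representatives) and selecting a common regular value of the interpolation, which produces a stratifold bordism in $U\cap V$ between the two would-be images.

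Exactness is then verified geometrically in the standard way. At $SH_n(U\cup V)$: a class built as a disjoint union of a stratifold into $U$ and one into $V$ admits a locally constant $\rho$, so $T$ is empty and $\partial=0$; conversely, if $\partial[S,g]=0$ with $W$ a nullbordism of $T$ in $U\cap V$, then gluing $W$ to the two halves $\{\rho\le t\}$ and $\{\rho\ge t\}$ of $S$ realises $[S,g]$ as the image of a pair in $SH_n(U)\oplus SH_n(V)$. At $SH_n(U)\oplus SH_n(V)$, the kernel of the difference map is identified with the image of $SH_n(U\cap V)$ by gluing bordisms in $U$ and in $V$ along their common restriction to $U\cap V$. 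At $SH_{n-1}(U\cap V)$, the image of $\partial$ is null-bordant in both $U$ and $V$ via the two halves of $S$, and conversely any class trivialised in both factors arises as $\partial$ of the stratifold obtained by gluing the two nullbordisms along their common boundary.

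The main technical obstacle is the stratifold analogue of the three manifold ingredients invoked above: smooth partitions of unity, a Sard-type theorem adequate to produce regular values, and the splitting by a regular value. Smooth partitions of unity come from the bump functions built into the inductive $p$-stratifold construction; the splitting statement is exactly the one recalled in the excerpt; the Sard statement follows by induction on strata, using that each stratum is a smooth manifold of dimension at most $n$. Additional care is required so that the regularity condition alluded to in the remark is preserved under all gluings and splittings performed, which is the principal bookkeeping task.
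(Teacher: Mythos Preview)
The paper does not actually prove this theorem: it merely states it, having introduced stratifold homology as background from Kreck's work \cite{K}. Your sketch is correct and is precisely the standard bordism-theoretic argument carried out in that reference, using the splitting along a regular value that the paper recalls just before the statement; so in substance you are reproducing the omitted proof rather than diverging from it.
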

$SH_{*}$ with the boundary operator is a homology theory. 
\begin{thm}
There is a natural isomorphism of homology theories $\Phi:SH_{*}\to H_{*}$\end{thm}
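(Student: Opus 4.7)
The plan is to invoke the standard uniqueness theorem for ordinary homology theories on the category of CW complexes: a natural transformation between two homology theories satisfying the Eilenberg--Steenrod axioms that is an isomorphism on the coefficients (i.e.\ on a point) is an isomorphism on all CW complexes. We have already been told that $SH_*$, equipped with its boundary operator, satisfies Mayer--Vietoris and hence is a homology theory; the same is true (classically) for $H_*$. Homotopy invariance of $SH_*$ follows from the fact that a homotopy $X \times I \to Y$ pulled back to a stratifold gives a bordism. So the two remaining tasks are to produce the natural transformation $\Phi$ and to compare on a point.

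To define $\Phi$, I would exploit the regularity conditions built into our stratifolds. A representative of a class in $SH_n(X)$ is a continuous map $g : S \to X$ where $S$ is compact, $n$-dimensional, the top stratum is oriented, and the codimension-one stratum is empty. These are exactly the conditions needed for $S$ to carry a fundamental class $[S] \in H_n(S; \mathbb{Z})$: a triangulation of the oriented top stratum, summed with the correct signs, is a singular cycle precisely because no codimension-one stratum is present to absorb boundary pieces (boundaries occur only in codimension $\geq 2$, whose contribution to the singular boundary vanishes). Set $\Phi([g : S \to X]) = g_*[S]$. This is well defined on bordism classes: a bordism $(T, G : T \to X)$ between $(S_0, g_0)$ and $(S_1, g_1)$ has $\partial T \cong S_0 \sqcup (-S_1)$, and the relative fundamental class of $T$ witnesses $g_{0*}[S_0] - g_{1*}[S_1] = 0$ in $H_n(X)$. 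Naturality in $X$ is immediate from the definition, and compatibility with the Mayer--Vietoris boundary follows because the stratifold boundary is defined by cutting along a regular preimage, and this operation corresponds under $\Phi$ to restricting the fundamental class to the two pieces, which is exactly how the singular Mayer--Vietoris boundary is computed chain-level.

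It then remains to show $\Phi$ is an isomorphism on a point. In dimension $0$ both sides are freely generated by oriented points, so $\Phi$ is an isomorphism. For $n > 0$ we need $SH_n(\mathrm{pt}) = 0$, that is, every compact $n$-dimensional stratifold $S$ (oriented top stratum, no codimension-one stratum) bounds. The natural candidate for the null-bordism is the cone $CS$, which is an $(n+1)$-dimensional stratifold with $\partial(CS) = S$; one must verify that $CS$ still satisfies our regularity conditions, namely that its top stratum is canonically oriented (as the open cone on the top stratum of $S$) and that no codimension-one stratum is created by the coning operation. This verification, using the inductive construction of $CS$ stratum-by-stratum and the fact that the cone point is a $0$-stratum of codimension $n+1 \geq 2$, is the main technical obstacle; it is straightforward but requires care at the cone tip. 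Once $\Phi$ is known to be an isomorphism on a point, the uniqueness theorem for homology theories on CW complexes completes the proof.
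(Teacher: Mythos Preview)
Your definition $\Phi([S,g]) = g_*[S]$ is exactly what the paper gives, and the paper simply cites \cite{key-12} for the verification that this is a natural isomorphism; your outline (well-definedness via bordism, comparison on a point using the cone, then Eilenberg--Steenrod uniqueness) is the standard argument one finds there. The only imprecision is your appeal to a triangulation of the top stratum---stratifolds need not be triangulable, and the fundamental class is constructed by other means (inductively over the strata, using that the lower strata have codimension $\geq 2$)---but this does not affect the overall correctness of your strategy.
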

\begin{proof}
See for example \cite{key-12}. $\Phi$ is given by $\Phi_{n}([S,f])=f_{*}([S])$
where $[S]\in H_{n}(S,\mathbb{Z})$ is the fundamental class of $S$.
\end{proof}

\subsection*{Stratifold group homology}

~

One defines the group homology of a group $G$ with coefficients in
a $\mathbb{Z}[G]$ module $M$ to be $H_{*}(BG,M)$ where $M$ is
considered as a local coefficients system. Our main interest is when
$M=\mathbb{Z}$ with the trivial action, then this reduces to the
integral homology $H_{*}(BG,\mathbb{Z})$. These groups are naturally
isomorphic to the groups $SH_{*}(BG,\mathbb{Z})$ by the theorem above. 

Let $G$ be a compact Lie group of dimension $d$. Denote by $SH_{n}(G,\mathbb{Z})$
the set of compact oriented stratifolds of dimension $n$ with a free
and orientation preserving $G$ action modulo $G$-cobordism, i.e.
a cobordism with a free $G$ action extending the given action on
the boundary (all actions on the stratifolds are assumed to be smooth).
We denote the class of the stratifold and the action by $[S,\rho]$. 

The following lemma and proposition are an easy exercise:
\begin{lem}
1) Let $S$ be a compact oriented  stratifold of dimension $n$ and
$\widetilde{S}\rightarrow S$ a covering space then $\widetilde{S}$
can be given a unique structure of an oriented stratifold such that
the covering map is an orientation preserving local isomorphism. If
$S$ is compact and the fibers are finite then $\widetilde{S}$ is
compact. 

2) Let $S$ be a compact oriented  stratifold of dimension $n$ with
an orientation preserving free $G$ action then $S/G$ can be given
a unique structure of a compact oriented stratifold such that  the
projection will be an orientation preserving local isomorphism.\end{lem}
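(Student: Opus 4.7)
The plan is to induct on the skeletal filtration used to build the $p$-stratifold structure of $S$. For part 1, suppose inductively that the preimage $\widetilde{S^{k-1}}$ of the $(k-1)$-skeleton has been endowed with a stratifold structure making the covering a local isomorphism. If $S^{k}=S^{k-1}\cup_{f}W$ with $W$ a smooth $k$-manifold with boundary and collar $c$, then I would take $\widetilde{W}$ to be the preimage of $W$ in $\widetilde{S}$. Because $\widetilde{W}\to W$ is a covering of a smooth manifold with boundary, it carries a unique smooth structure (with lifted collar) making the projection a local diffeomorphism. The attaching map lifts canonically as the restriction of the covering to $\partial\widetilde{W}$, and gluing by this lifted attaching map gives the stratifold structure on the preimage of $S^{k}$. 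The base case $k=0$ is trivial since discrete sets carry a tautological $0$-dimensional stratifold structure.

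Once the construction is in place, I would verify the remaining points in turn: (i) smoothness of the lifted attaching map as a stratifold map, which follows because the covering is a local diffeomorphism on each stratum, so pre-composition with a smooth function on $\widetilde{S^{k-1}}$ yields a function that is smooth near every point of $\partial\widetilde{W}$; (ii) orientation of the top stratum of $\widetilde{S}$, given uniquely by pulling back the orientation of the top stratum of $S$ via the local diffeomorphism; (iii) emptiness of the codimension-one stratum, inherited directly from $S$; (iv) uniqueness, since any stratifold structure on $\widetilde{S}$ making the covering a local isomorphism is forced stratum by stratum. Compactness of $\widetilde{S}$ under finite fibers and compact $S$ is the standard covering-space argument.

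For part 2 I would run essentially the same induction for the quotient map $S\to S/G$, which, since $G$ is (finite and) acts freely, is again a covering. The $G$-action respects the skeletal filtration---being by stratifold automorphisms it preserves dimension of strata---so each manifold stratum $W_{k}$ is $G$-invariant, and the free action of the finite group produces a smooth manifold quotient $W_{k}/G$. The attaching maps and collars, being $G$-equivariant, descend, yielding a stratifold structure on $S/G$; the top-stratum orientation comes from the orientation-preservation hypothesis, and uniqueness is forced by the local-isomorphism requirement. The main technical obstacle in both parts is keeping the collar-compatibility condition $gc(x,t)=gf(x)$ coherent under the passage to the cover or to the quotient, but since the projection is a local diffeomorphism on a neighborhood of the attaching locus in each stratum, this bookkeeping reduces verbatim to the corresponding statement for smooth manifolds with collars.
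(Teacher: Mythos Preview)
Your proposal is correct. The paper does not supply a proof at all; it simply declares the lemma (together with the subsequent proposition) to be ``an easy exercise,'' so your inductive argument on the skeletal filtration of the $p$-stratifold is exactly the kind of detail the author is leaving to the reader, and it goes through as you describe.
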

\begin{prop}
Let $G$ be a finite group, the map $\Psi:SH_{n}(G,\mathbb{Z})\rightarrow SH_{n}(BG,\mathbb{Z})$
given be $[S,\rho]\mapsto[S/G\xrightarrow{f}BG]$, where $f$ is the
classifying map, is an isomorphism.\\
The map $\Psi^{-1}:SH_{n}(BG,\mathbb{Z})\rightarrow SH_{n}(G,\mathbb{Z})$
is given by $[S\xrightarrow{f}BG]\mapsto[\widetilde{S},\rho]$ where
$\widetilde{S}\rightarrow S$ is the pull back of the universal bundle
$EG\rightarrow BG$ and $\rho$ is the induced action.\end{prop}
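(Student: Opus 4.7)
The plan is to verify that $\Psi$ and the proposed $\Psi^{-1}$ are each well-defined on cobordism classes, and then to check that they are mutually inverse by invoking the universal property of $EG\to BG$. The geometric content is essentially carried by the preceding lemma, which supplies the stratifold structures on quotients and covers; the remainder is bookkeeping.

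First I would check that $\Psi$ is well-defined. Given $(S,\rho)$ with $S$ a compact oriented $n$-stratifold and $\rho$ a smooth, free, orientation preserving $G$-action, part (2) of the lemma gives a unique compact oriented stratifold structure on $S/G$ for which $S\to S/G$ is an orientation preserving local isomorphism. Since $G$ is finite and acts freely, $S\to S/G$ is a principal $G$-bundle, hence admits a classifying map $f:S/G\to BG$, unique up to homotopy. Homotopic maps into $BG$ represent the same class in $SH_n(BG,\mathbb{Z})$ (via the cylinder construction), so $[S/G\to BG]$ depends only on $[S,\rho]$. A $G$-cobordism $(T,\tilde\rho)$ between $(S,\rho)$ and $(S',\rho')$ likewise descends to a stratifold with boundary $T/G$ carrying a classifying map extending the ones on $S/G$ and $S'/G$, providing the required bordism.

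Next I would check that $\Psi^{-1}$ is well-defined. Given $f:S\to BG$, pulling back the principal $G$-bundle $EG\to BG$ produces $\widetilde S\to S$, a principal $G$-bundle and, since $G$ is finite, a finite covering. By part (1) of the lemma, $\widetilde S$ acquires a unique compact oriented stratifold structure making the projection an orientation preserving local isomorphism, and the deck transformations give a smooth, free, orientation preserving action $\rho$. A bordism $(T,F)$ in $SH_n(BG,\mathbb{Z})$ yields, by the same pullback and covering-lift construction, a $G$-cobordism $(\widetilde T,\tilde\rho)$ between $(\widetilde S,\rho)$ and $(\widetilde{S'},\rho')$; and homotopic choices of $f$ produce isomorphic pullback bundles as $G$-stratifolds. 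Hence the assignment $[S,f]\mapsto[\widetilde S,\rho]$ descends to cobordism classes.

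Finally I would verify the two compositions are the identity. For $\Psi^{-1}\circ\Psi$: starting with $(S,\rho)$, the principal $G$-bundle $S\to S/G$ is by construction classified by $f$, so by the universal property of $EG\to BG$ the pullback $\widetilde{S/G}$ is $G$-equivariantly, smoothly, orientation preservingly isomorphic to $S$, giving equality in $SH_n(G,\mathbb{Z})$. For $\Psi\circ\Psi^{-1}$: starting with $(S,f)$, the pullback diagram gives an orientation preserving isomorphism of stratifolds $\widetilde S/G\cong S$, under which the classifying map of the principal bundle $\widetilde S\to\widetilde S/G$ is homotopic to the original $f$ by uniqueness of classifying maps up to homotopy; hence $\Psi[\widetilde S,\rho]=[\widetilde S/G\to BG]=[S,f]$.

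The main obstacle I expect is the cobordism-side bookkeeping: making sure that quotients and pullbacks of stratifolds with boundary carrying the relevant free $G$-actions really give legitimate cobordisms in the respective theories, and that the homotopy ambiguity in choosing classifying maps is absorbed consistently at every step. Once the lemma is applied, no further geometric subtleties arise.
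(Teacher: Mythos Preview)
Your proof is correct and is exactly the natural argument; the paper itself offers no proof here, declaring both the preceding lemma and this proposition ``an easy exercise.'' Your verification that $\Psi$ and $\Psi^{-1}$ are well-defined on cobordism classes (using the lemma for the stratifold structures on quotients and covers, and the homotopy-uniqueness of classifying maps) and that they are mutual inverses via the universal property of $EG\to BG$ is precisely what the reader is expected to supply.
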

\begin{rem*}
Similarly, an isomorphism $\Psi:SH_{n+d}(G,\mathbb{Z})\rightarrow SH_{n}(BG,\mathbb{Z})$
can be constructed for a compact Lie group $G$ of dimension $d$.
\end{rem*}
There is a natural product structure $SH_{n}(G,\mathbb{Z})\otimes SH_{m}(G,\mathbb{Z})\rightarrow SH_{n+m}(G,\mathbb{Z})$
given by the Cartesian product with the diagonal action: \[
[S,\rho]\otimes[S',\rho']\rightarrow[S\times S',\Delta]\]
This product vanishes whenever $n,m>0$ since it is the boundary of
$[CS\times S',\tilde{\rho}]$ where $\tilde{\rho}$ is the obvious
extension of the action $\Delta$, but it is also the boundary of
$[S\times CS',\hat{\rho}]$ where $\hat{\rho}$ is the obvious extension
of the action $\Delta$. 

The Kreck product is a secondary product defined by gluing $(CS\times S',\tilde{\rho})$
and $(S\times CS',\hat{\rho})$ along their common boundary $(S\times S',\Delta)$
\[
[S,\rho]\otimes[S',\rho']\rightarrow[S\ast S',\rho\ast\rho']\]
(note that after the gluing what we get is the join of the two stratifolds). 

The product $SH_{n}(G,\mathbb{Z})\otimes SH_{m}(G,\mathbb{Z})\rightarrow SH_{n+m+1}(G,\mathbb{Z})$
does not vanish in general. When $G$ is finite cyclic then $SH_{n}(G,\mathbb{Z})$
is infinite cyclic when $n=0$, zero when $n$ is even and isomorphic
to $G$ when $n$ is odd. The generators can be taken to be odd dimensional
spheres with the action induced by the complex multiplication, when
the sphere is considered as the unit sphere in a complex space. In
this case the product of generators is again a generator. A similar
construction will hold for $G=S^{1}$ and $S^{3}$. This implies that
the product is non trivial for every group with a free and orientation
preserving smooth action on a sphere.

There is an isomorphism $\Psi:SH_{n}(G,\mathbb{Z})\rightarrow\widehat{H}^{-n-1}(G,\mathbb{Z})$
{\small for }$n>0$ given by the composition:

\[
SH_{n}(G,\mathbb{Z})\rightarrow SH_{n}(BG,\mathbb{Z})\rightarrow H_{n}(BG,\mathbb{Z})\rightarrow\widehat{H}^{-n-1}(G,\mathbb{Z})\]

One might show that this isomorphism is given the following way: Take
some model for $EG$. Its singular chain complex $C_{*}(EG)$ is a
projective resolution for $\mathbb{Z}$ over $\mathbb{Z}[G]$. Let
$[(S,\rho)]$ be an element in $SH_{n}(G,\mathbb{Z})$. There is a
map $f:S\rightarrow EG$ that commutes with the action of $G$. This
map is unique up to $G$ homotopy ($f$ is called the classifying
map), any two such maps are $G$ homotopic. Since $f$ commute with
the action of $G$ it induces a map of the singular chain complexes
which are complexes of $\mathbb{Z}[G]$ modules - $C(S)\xrightarrow{f_{*}}C(EG)$.
As shown in \cite{key-12}, $S$ has a fundamental class, we take
some representative of it which is $G$ invariant (we can do that
by lifting a fundamental cycle of $S/G$) and denote it by $s$. We
get an element $f_{*}(s)\in C(EG)_{n}$ which is both invariant and
a cycle thus we get an element in $Hom(\mathbb{Z},\Omega^{n+1})$.
As before different choices of $S$ and $f$ will give elements that
differ by a map which factors through a projective (the fundamental
class of the cobordism is mapped into $C(EG)_{n+1}$ which is projective),
hence gives a homomorphism $SH_{n}(G,\mathbb{Z})\rightarrow\underline{Hom}_{\mathbb{Z}[G]}(\mathbb{Z},\Omega^{n+1}\mathbb{Z})=\widehat{H}^{-n-1}(G,\mathbb{Z})$
which is exactly the isomorphism above.

Now we would like to show that the Kreck product is the same as the
cup product. We show that the join of two fundamental classes is equal
to the fundamental class of their join.
\begin{lem}
Let $S$ and $S'$ be two compact  oriented stratifolds of dimension
$n$ and $m$ ($m,n>0$) respectively. Denote the fundamental classes
of $S,S',S\ast S'$ by $a_{S},a_{S'},a_{S\ast S'}$ then $a_{S}\ast a_{S'}=a_{S\ast S'}$.\end{lem}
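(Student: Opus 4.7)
The plan is to reduce the identification of the two classes to a comparison of their images under the Mayer--Vietoris connecting homomorphism, which I will arrange to be injective in the relevant degree.

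Cover the join by the two open sets $U = (S \ast S') \setminus S$ and $V = (S \ast S') \setminus S'$. The first deformation retracts onto $S'$ (by pushing the join parameter to $1$), the second onto $S$, and their intersection $(0,1) \times S \times S'$ retracts onto the equator $S \times S'$. Since $\dim S' = m$ and $\dim S = n$ are both strictly less than $n+m+1$ (this uses $n, m > 0$), the singular homology of $U$ and $V$ vanishes in degree $n+m+1$, and the Mayer--Vietoris sequence produces an injection
\[
\partial \colon H_{n+m+1}(S \ast S') \hookrightarrow H_{n+m}(S \times S').
\]
Using the standard identity $a_S \times a_{S'} = a_{S \times S'}$, it suffices to show that both $a_{S \ast S'}$ and $a_S \ast a_{S'}$ map under $\partial$ to the same class $a_S \times a_{S'}$, up to a common sign arising from the choice of which half of the cover is used to compute the connecting map.

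For $a_{S \ast S'}$ this is essentially geometric: realize $S \ast S'$ as the stratifold gluing of $CS \times S'$ and $S \times CS'$ along the common boundary stratifold $S \times S'$. A fundamental cycle of $S \ast S'$ splits as $c = c_1 + c_2$ with $c_i$ supported in one of the two halves, and the Mayer--Vietoris boundary $\partial[c]$ is by definition $[\partial c_1]$. Since $CS \times S'$ is a stratifold cobordism from $\emptyset$ to $S \times S'$ whose top stratum restricts to the product orientation on $S \times S'$, this represents $a_S \times a_{S'}$.

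For $a_S \ast a_{S'}$ one must do a chain-level calculation. Choose singular cycles $s$ and $s'$ representing the two fundamental classes; the join boundary formula of the previous lemma shows that $s \ast s'$ is a cycle of degree $n+m+1$. Each summand $\sigma \ast \tau$ is an affine simplex in the join which may be cut along the equatorial slice $\{t = \tfrac{1}{2}\}$ into a piece inside $CS \times S'$ and a piece inside $S \times CS'$; summing over terms gives a decomposition $s \ast s' = \alpha + \beta$ with $\alpha$ supported in $U$ and $\beta$ in $V$. The Mayer--Vietoris boundary is then represented by $\partial \alpha$, which a direct check identifies, up to the same overall sign, with the singular cross product $s \times s'$, and hence with $a_S \times a_{S'}$. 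Injectivity of $\partial$ gives the desired equality. The main obstacle is this last chain-level step: one must give an honest subdivision of each joined simplex $\sigma \ast \tau$ along its equator and verify that the boundary identity with the cross-product simplex holds with signs matching those produced by the factor $(-1)^{\lvert \sigma \rvert + 1}$ in the join boundary formula.
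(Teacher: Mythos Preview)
Your argument is essentially the paper's: the same Mayer--Vietoris cover of the join, the same injectivity of $\partial$ in degree $n+m+1$, and the same reduction to showing that both classes map to $a_S \times a_{S'}$. The only differences are that you cite $a_S \times a_{S'} = a_{S \times S'}$ as a standard fact whereas the paper actually proves it via a commutative square comparing the cross product with the local-homology cross product at a point $(s,s')$, and that your self-flagged ``main obstacle'' (the honest chain-level subdivision verifying $\partial(a_S \ast a_{S'}) = a_S \times a_{S'}$) is handled no more carefully in the paper, which simply asserts it follows from the definition of $\partial$ ``after taking the suitable representative for $a_S \ast a_{S'}$.''
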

\begin{proof}
Let $U=\left\{ (s,s',t)\in S\ast S'\mid t<1\right\} $, $V=\left\{ (s,s',t)\in S\ast S'\mid0<t\right\} $
then $U\simeq S,V\simeq S',U\cap V\simeq S\times S',U\cup V=S\ast S'$.
By $Mayer\, Vietoris$ the boundary map $\partial:H_{n+m+1}(S\ast S',\mathbb{Z})\rightarrow H_{n+m}(S\times S',\mathbb{Z})$
is injective (an isomorphism actually) and by the definition of the
boundary we have $\partial(a_{S\ast S'})=a_{S\times S'}$. It will
be enough to show that $\partial(a_{S}\ast a_{S'})=a_{S\times S'}$.
We do know that $\partial(a_{S}\ast a_{S'})=a_{S}\times a_{S'}$,
this follows from the definition of the boundary after taking the
suitable representative for $a_{S}\ast a_{S'}$. So we reduced the
problem to proving that $a_{S}\times a_{S'}=a_{S\times S'}$. This
fact follows from the following commutative diagram:\\
\[
\begin{array}{ccc}
H_{n}(S,\mathbb{Z})\otimes H_{m}(S',\mathbb{Z}) & \xrightarrow{\times} & H_{n+m}(S\times S',\mathbb{Z})\\
\downarrow &  & \downarrow\\
H_{n}(S\mid s,\mathbb{Z})\otimes H_{m}(S'\mid s',\mathbb{Z}) & \xrightarrow{\times} & H_{n+m}(S\times S'\mid s\times s',\mathbb{Z})\end{array}\]
Where $H_{k}(X\mid x,\mathbb{Z})$ stands for $H_{k}(X,X/\left\{ x\right\} ,\mathbb{Z})$.
In order to show that $a_{S}\times a_{S'}=a_{S\times S'}$ we have
to show that for every $(s,s')\in S\times S'$ $a_{S}\times a_{S'}$
is mapped by the right vertical map to the generator of $H_{n+m}(S\times S'\mid s\times s')$.
By definition $a_{S}\otimes a_{S'}$ is mapped by the left vertical
map to the tensor of the generators for $H_{n}(S\mid s,\mathbb{Z})\otimes H_{m}(S'\mid s',\mathbb{Z})$.
Also by definition the element $a_{S}\otimes a_{S'}$ is mapped by
the upper arrow to $a_{S}\times a_{S'}$. By the commutativity of
the diagram it is enough to show that the lower arrow maps the tensor
of generators to the generator of the product. By excision, this can
be rephrased that the same holds for the map $H_{n}(\mathbb{R}^{n}\mid0,\mathbb{Z})\otimes H_{m}(\mathbb{R}^{m}\mid0,\mathbb{Z})\xrightarrow{\times}H_{n+m}(\mathbb{R}^{n+m}\mid0,\mathbb{Z})$
which is the case if we make the right choice of orientations.
\end{proof}
We have thus proved the following theorem:
\begin{thm}
Let $G$ be a finite group, then there is a natural isomorphism between
$SH_{n}(G,\mathbb{Z})$ and $\widehat{H}^{-n-1}(G,\mathbb{Z})$ and
this isomorphism respects the product.
\end{thm}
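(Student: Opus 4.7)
The isomorphism $\Psi: SH_{n}(G,\mathbb{Z}) \to \widehat{H}^{-n-1}(G,\mathbb{Z})$ has already been constructed, so what remains is to show that it is multiplicative. The plan is to trace both products through their explicit chain-level descriptions: the cup product via the join of invariant cycles in $EG \ast EG$ (from the earlier theorem), and the Kreck product via the join of stratifolds with diagonal action. The bridge between them is the lemma on joins of fundamental classes just proved.

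Pick representatives $[S,\rho] \in SH_{n}(G,\mathbb{Z})$ and $[S',\rho'] \in SH_{m}(G,\mathbb{Z})$ and choose $G$-equivariant classifying maps $f: S \to EG$ and $f': S' \to EG$. Lift fundamental cycles of $S/G$ and $S'/G$ to obtain $G$-invariant singular chains $s \in C_{n}(S)$ and $s' \in C_{m}(S')$ representing the respective fundamental classes. By the explicit description of $\Psi$ recalled in the paper, the images $\Psi([S,\rho])$ and $\Psi([S',\rho'])$ are represented by the invariant cycles $f_{*}(s)$ and $f'_{*}(s')$ in $C_{*}(EG)$.

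Now the Kreck product is $[S \ast S', \rho \ast \rho']$, and the map $f \ast f': S \ast S' \to EG \ast EG$ is $G$-equivariant because both $f$ and $f'$ are and the action on a join is diagonal; since $EG \ast EG$ is a contractible free $G$-space, $f \ast f'$ is a classifying map for $(S \ast S', \rho \ast \rho')$. By the lemma just proved, $a_{S \ast S'} = a_{S} \ast a_{S'}$, so $s \ast s'$ (formed via the bilinear join of singular chains) is a $G$-invariant cycle representing the fundamental class of $S \ast S'$. Applying the natural chain map $h: C_{*}(EG) \ast C_{*}(EG) \to C_{*}(EG \ast EG)$ from the earlier lemma and using its naturality, the image $(f \ast f')_{*}(s \ast s')$ coincides with $h(f_{*}(s) \otimes f'_{*}(s'))$, so $\Psi([S \ast S', \rho \ast \rho'])$ is represented precisely by the join of the invariant cycles $f_{*}(s)$ and $f'_{*}(s')$ inside $C_{*}(EG \ast EG)$.

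By the theorem describing the cup product in negative Tate cohomology as the join of invariant cycles in $EG \ast EG$, this element represents $\Psi([S,\rho]) \cup \Psi([S',\rho'])$, so the two agree. The main obstacle is bookkeeping: one must verify that the chain $s \ast s'$ really represents $a_{S \ast S'}$ as a $G$-invariant cycle (this is the content of the previous lemma) and that the chain-level join intertwines correctly with $h$ and with the push-forwards $f_{*}$, $f'_{*}$, but both facts follow directly from the constructions. Naturality of $\Psi$ then renders the identification independent of the choices of representatives, classifying maps, and fundamental cycle lifts, completing the proof.
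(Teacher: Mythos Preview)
Your proposal is correct and follows exactly the approach of the paper: the paper simply writes ``We have thus proved the following theorem'' after the lemma on joins of fundamental classes, leaving the assembly of the pieces (the explicit description of $\Psi$ via invariant fundamental cycles, the join description of the Tate cup product, and the lemma $a_{S}\ast a_{S'}=a_{S\ast S'}$) to the reader, and you have carried out precisely that assembly. The only embellishment is that you make explicit the naturality of the chain map $h$ with respect to $f_{*}$ and $f'_{*}$, which the paper tacitly uses.
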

In other words, the product in group homology defined by Kreck using
stratifold homology and the join agrees with the cup product in negative
Tate cohomology.

\section*{Appendix - The stable module category}

In this appendix we give the background needed for the construction
we used for Tate cohomology.

Again $R$ is a ring with unit, not necessarily commutative, and all
modules are assumed to be left $R$-modules.

\subsection*{The stable category $St-mod(R)$}

~

Let $M$ and $N$ be two $R$-modules, denote by $PHom_{R}(M,N)$
the set of $R$-homomorphisms $M\xrightarrow{f}N$ that factors through
a projective $R$-module, i.e. there exists a projective $R$-module
$P$ and two maps $M\xrightarrow{f_{1}}P\xrightarrow{f_{2}}N$ such
that  $f=f_{2}\circ f_{1}$. The following proposition is left as
an easy exercise:
\begin{prop}
$PHom_{R}(M,N)$ is a sub module of $Hom_{R}(M,N)$ and the composition
of two homomorphisms such that one of them factors through a projective
module also factors through a projective module.
\end{prop}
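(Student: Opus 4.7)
The proposition splits into two independent claims, each of which I would handle by producing an explicit factorization.

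For the submodule claim, the plan is to check closure under addition and negation (and, if one views $Hom_R(M,N)$ as a module over the center $Z(R)$, closure under central scalars). Suppose $f,g \in PHom_R(M,N)$ with factorizations $f = f_2 \circ f_1$ through a projective module $P$ and $g = g_2 \circ g_1$ through a projective module $Q$. I would exhibit a factorization of $f+g$ through the module $P \oplus Q$, which is projective because a finite direct sum of projectives is projective. The required maps are $M \to P \oplus Q$ given by $m \mapsto (f_1(m), g_1(m))$ and $P \oplus Q \to N$ given by $(p,q) \mapsto f_2(p) + g_2(q)$; both are visibly $R$-linear, and the composition equals $f + g$. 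Closure under negation is immediate by replacing $f_2$ with $-f_2$, which remains $R$-linear, and closure under central scalars is similar.

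For the composition claim, suppose $f \colon M \to N$ and $h \colon N \to L$. If $f = f_2 \circ f_1$ factors through a projective $P$, then rewriting the composition as $h \circ f = (h \circ f_2) \circ f_1$ shows that $h \circ f$ factors through $P$ via the $R$-linear map $h \circ f_2 \colon P \to L$. Symmetrically, if $h = h_2 \circ h_1$ factors through a projective $Q$, then $h \circ f = h_2 \circ (h_1 \circ f)$ factors through $Q$ via $h_1 \circ f \colon M \to Q$.

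Both statements are essentially formal; the only points requiring care are that $P \oplus Q$ really is projective (which follows because it is a summand of a sum of frees) and that every intermediate map constructed is $R$-linear rather than merely additive. There is no substantive obstacle, and the verification is entirely by chasing the factorizations.
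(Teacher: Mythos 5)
Your proof is correct; the paper explicitly leaves this proposition as an easy exercise, and your argument (factoring $f+g$ through $P\oplus Q$ and rewriting $h\circ f$ as $(h\circ f_2)\circ f_1$ or $h_2\circ(h_1\circ f)$) is exactly the standard verification that would be intended. Your parenthetical remark that the module structure should be over the center $Z(R)$ is a worthwhile precision, since the paper's $R$ is not assumed commutative and the modules are one-sided.
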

\noindent By the proposition above we can define {\small $\underline{Hom}_{R}(M,N)=Hom_{R}(M,N)/PHom_{R}(M,N)$}
which is an $R$-module, and a composition {\small $\underline{Hom}_{R}(N,K)\times\underline{Hom}_{R}(M,N)\rightarrow\underline{Hom}_{R}(M,K)$}
which is $R$-bilinear. 
\begin{defn}
Let $R$ be a ring, denote by $St-mod(R)$ the category whose objects
are all $R$-modules and the morphisms between each $M$ and $N$
are $\underline{Hom}_{R}(M,N)$. This category is called the stable
module category. 
\end{defn}

\subsection*{The functor $\Omega$}

~

For every $R$-module $M$ choose (once and for all) a projective
cover, that is a surjective map $\pi_{M}:P_{M}\rightarrow M$ where
$P_{M}$ is a projective $R$-module (for example the canonical free
cover). 

Define a functor $\Omega:St-mod(R)\rightarrow St-mod(R)$ the following
way: For an object $M$ define $\Omega(M)=Ker(\pi_{M})$. For a morphism
$[f]\in\underline{Hom}_{R}(M,N)$ choose some representative $f:M\rightarrow N$,
use the fact that $P_{M}$ is projective and $\pi_{N}$ is surjective
to define a map $\widetilde{f}:P_{M}\rightarrow P_{N}$ such that
the following diagram become commutative:\\
\[
\begin{array}{ccccccccc}
0 & \longrightarrow & \Omega(M) & \longrightarrow & P_{M} & \longrightarrow & M & \longrightarrow & 0\\
\downarrow &  & \downarrow\widetilde{f}|_{\Omega(M)} &  & \downarrow\widetilde{f} &  & \downarrow f &  & \downarrow\\
0 & \longrightarrow & \Omega(N) & \longrightarrow & P_{N} & \longrightarrow & N & \longrightarrow & 0\end{array}\]
Now take $\Omega(f)$ to be the class of the induced map $\widetilde{f}|_{\Omega(f)}:\Omega(M)\rightarrow\Omega(N)$.
This is well defined by the following lemma:
\begin{lem}
1) In the previous notations, if $\tilde{f}_{1}$ and $\tilde{f}_{2}$
are two lifts of $f\circ\pi_{M}$ then $\tilde{f}_{1}|_{\Omega(M)}$
and $\tilde{f}_{2}|_{\Omega(M)}$ represent the same element in $\underline{Hom}_{R}(\Omega M,\Omega N)$\textup{.}\\
2) The map $Hom_{R}(M,N)\rightarrow\underline{Hom}_{R}(\Omega M,\Omega N)$
is a homomorphism.\\
3) If $f$ factors through a projective then also $\widetilde{f}|_{\Omega(f)}$
does, thus we get a homomorphism $\underline{Hom}_{R}(M,N)\rightarrow\underline{Hom}_{R}(\Omega M,\Omega N)$.\end{lem}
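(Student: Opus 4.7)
The plan is to handle the three claims in order, each time exploiting the lifting property of the projective cover $\pi_M\colon P_M\to M$.

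For part (1), suppose $\tilde f_1,\tilde f_2\colon P_M\to P_N$ both lift $f\circ\pi_M$, meaning $\pi_N\circ\tilde f_i = f\circ\pi_M$. Then their difference $\tilde f_1-\tilde f_2$ satisfies $\pi_N\circ(\tilde f_1-\tilde f_2)=0$, so it takes values in $\Omega(N)=\ker\pi_N$. Thus $\tilde f_1|_{\Omega(M)}-\tilde f_2|_{\Omega(M)}$ equals the restriction to $\Omega(M)$ of a map $P_M\to\Omega(N)$, which means it factors as $\Omega(M)\hookrightarrow P_M\to\Omega(N)$ through the projective module $P_M$. Hence $\tilde f_1|_{\Omega(M)}$ and $\tilde f_2|_{\Omega(M)}$ represent the same class in $\underline{Hom}_R(\Omega M,\Omega N)$.

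For part (2), given $f,g\in Hom_R(M,N)$, pick lifts $\tilde f,\tilde g\colon P_M\to P_N$. Then $\tilde f+\tilde g$ is a lift of $(f+g)\circ\pi_M$, and its restriction to $\Omega(M)$ is $\tilde f|_{\Omega(M)}+\tilde g|_{\Omega(M)}$. By (1) this represents $\Omega(f+g)$, and the formula $\Omega(f+g)=\Omega(f)+\Omega(g)$ follows in $\underline{Hom}_R(\Omega M,\Omega N)$. Additivity in this target is inherited from $Hom_R(\Omega M,\Omega N)$, since $PHom_R(\Omega M,\Omega N)$ is a submodule.

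For part (3), suppose $f$ factors as $M\xrightarrow{f_1}Q\xrightarrow{f_2}N$ with $Q$ projective. Since $P_M$ is projective and $\pi_Q$ is surjective, lift $f_1\circ\pi_M$ to $\tilde f_1\colon P_M\to P_Q$; similarly lift $f_2\circ\pi_Q$ to $\tilde f_2\colon P_Q\to P_N$. The composite $\tilde f_2\circ\tilde f_1$ is a lift of $f\circ\pi_M$, and by construction its restriction to $\Omega(M)$ lands in $\Omega(Q)$ after the first map and is carried into $\Omega(N)$ by the second, i.e.\ it factors as $\Omega(M)\to\Omega(Q)\to\Omega(N)$. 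The key remaining observation is that $\Omega(Q)$ is projective when $Q$ is: the short exact sequence $0\to\Omega(Q)\to P_Q\to Q\to 0$ splits because $Q$ is projective, exhibiting $\Omega(Q)$ as a direct summand of the projective $P_Q$. By part (1) this chosen lift represents $\Omega(f)$, so $\Omega(f)$ factors through a projective and vanishes in $\underline{Hom}_R(\Omega M,\Omega N)$. Combined with (2) this shows the map descends to a well-defined homomorphism $\underline{Hom}_R(M,N)\to\underline{Hom}_R(\Omega M,\Omega N)$.

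The only nontrivial step is (3), and specifically the observation that $\Omega(Q)$ is projective for projective $Q$; once that is noted, the whole argument is essentially a diagram chase using the universal property of projectives. I do not anticipate any serious obstacle beyond keeping track of which module serves as the "intermediate projective" in each case.
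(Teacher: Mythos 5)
Your proofs of parts (1) and (2) are exactly the paper's: the difference of two lifts lands in $\Omega(N)=\ker\pi_N$, so its restriction to $\Omega(M)$ factors through the projective $P_M$; and additivity follows by choosing the lift of a sum to be the sum of the lifts. Part (3) is where you diverge slightly, and both routes work. The paper replaces the chosen projective cover of the intermediate projective $P$ by the identity map $P\to P$, viewed as a cover with kernel $0$; the composite lift $P_M\to P\to P_N$ then restricts to a map $\Omega(M)\to 0\to\Omega(N)$, which is literally zero, and part (1) finishes the argument. You instead keep the chosen cover $\pi_Q\colon P_Q\to Q$ and observe that $0\to\Omega(Q)\to P_Q\to Q\to 0$ splits because $Q$ is projective, so $\Omega(Q)$ is a projective direct summand of $P_Q$ and your lift's restriction factors through it. Your version costs you the extra (easy) observation that $\Omega$ of a projective is projective, but it has the small advantage of staying entirely within the fixed choices of projective covers rather than invoking a different cover of $Q$ and appealing to part (1) to compare; the paper's version is marginally shorter. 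Both are complete and correct.
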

\begin{proof}
1) Assume we have two such lifts $\tilde{f}_{1}$ and $\tilde{f}_{2}$
then the following diagram is commutative (where $h=\tilde{f}_{1}|_{\Omega(M)}-\tilde{f}_{2}|_{\Omega(M)}$):\\
\[
\begin{array}{ccccccccc}
0 & \longrightarrow & \Omega(M) & \longrightarrow & P_{M} & \longrightarrow & M & \longrightarrow & 0\\
\downarrow &  & \downarrow h &  & \downarrow\tilde{f}_{1}-\tilde{f}_{2} &  & \downarrow0 &  & \downarrow\\
0 & \longrightarrow & \Omega(N) & \longrightarrow & P_{N} & \longrightarrow & N & \longrightarrow & 0\end{array}\]
It will be enough to show that $h$ factors through $P_{M}$ which
is projective. This follows from the fact that the image of the map
$\tilde{f}_{1}-\tilde{f}_{2}$ is contained in $\Omega(N)$ by the
commutativity of the diagram.\\
2) Choose the lifting of $a\cdot f+b\cdot g$ to be $a\cdot\widetilde{f}+b\cdot\widetilde{g}$.
\\
3) Assume $f$ factors through a projective module $P$. We have
the following diagram:\\
\[
\begin{array}{ccccccccc}
0 & \longrightarrow & \Omega(M) & \longrightarrow & P_{M} & \longrightarrow & M & \longrightarrow & 0\\
\downarrow &  & \downarrow &  & \downarrow &  & \downarrow &  & \downarrow\\
0 & \longrightarrow & 0 & \longrightarrow & P & \longrightarrow & P & \longrightarrow & 0\\
\downarrow &  & \downarrow &  & \downarrow s &  & \downarrow &  & \downarrow\\
0 & \longrightarrow & \Omega(N) & \longrightarrow & P_{N} & \longrightarrow & N & \longrightarrow & 0\end{array}\]
The map $s:P\rightarrow P_{N}$ can be defined using the fact that
$P$ is projective and the map $P_{N}\rightarrow N$ is surjective.
We get that the induced map $\Omega(M)\rightarrow\Omega(N)$ is the
zero map.
\end{proof}
The following is important for the definition of Tate cohomology: 
\begin{prop}
Let $G$ be a finite group and $R=\mathbb{Z}[G]$. If $M$ is a $\mathbb{Z}[G]$
module which is projective as an Abelian group then the map $\underline{Hom}_{R}(M,N)\rightarrow\underline{Hom}_{R}(\Omega M,\Omega N)$
is an isomorphism. \end{prop}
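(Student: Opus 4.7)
The plan is to build a two-sided inverse to $\Psi$ by exploiting a single vanishing fact, from which both injectivity and surjectivity follow by standard diagram chasing on the defining sequences $0\to\Omega M\to P_M\to M\to 0$ and $0\to\Omega N\to P_N\to N\to 0$. The key input is that for a $\mathbb{Z}[G]$-module $M$ which is $\mathbb{Z}$-projective and any $\mathbb{Z}[G]$-projective $Q$, one has $\mathrm{Ext}^1_{\mathbb{Z}[G]}(M,Q)=0$. This is the standard ``untwisting'' fact: the $\mathbb{Z}$-linear map $\mathbb{Z}[G]\otimes_{\mathbb{Z}}M\to\mathbb{Z}[G]\otimes_{\mathbb{Z}}M$ sending $g\otimes m$ to $g\otimes g^{-1}m$ for $g\in G$ is a $\mathbb{Z}[G]$-isomorphism from the diagonal action to the action on the first factor only, so when $M$ is $\mathbb{Z}$-free the diagonal $\mathbb{Z}[G]\otimes M$ is $\mathbb{Z}[G]$-free. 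Taking direct summands gives that $\mathrm{Hom}_{\mathbb{Z}}(M,Q)$ with the conjugation action is cohomologically trivial (here finiteness of $G$ lets induced and coinduced modules coincide), so $\mathrm{Ext}^{n}_{\mathbb{Z}[G]}(M,Q)=H^{n}(G,\mathrm{Hom}_{\mathbb{Z}}(M,Q))$ vanishes for all $n\geq 1$.

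For surjectivity of $\Psi$, given a representative $h\colon\Omega M\to\Omega N$, I compose with the inclusion $\iota\colon\Omega N\hookrightarrow P_N$ and use $\mathrm{Ext}^1_{\mathbb{Z}[G]}(M,P_N)=0$ to extend $\iota\circ h$ to a map $\widetilde{f}\colon P_M\to P_N$. Because $\iota\circ h$ takes values in $\ker\pi_N$, the composite $\pi_N\widetilde{f}$ vanishes on $\Omega M$ and descends to $f\colon M\to N$, for which $\widetilde{f}$ is the chosen lift; by construction $\widetilde{f}|_{\Omega M}=h$, so $\Psi([f])=[h]$.

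For injectivity, suppose $\Psi([f])=0$, so $\widetilde{f}|_{\Omega M}=b\circ a$ for some $a\colon\Omega M\to Q$ and $b\colon Q\to\Omega N$ with $Q$ projective. I extend $a$ to $\widetilde{a}\colon P_M\to Q$ using $\mathrm{Ext}^1_{\mathbb{Z}[G]}(M,Q)=0$, and consider the difference $g:=\widetilde{f}-\iota b\widetilde{a}\colon P_M\to P_N$. Its restriction to $\Omega M$ is $\iota ba-\iota ba=0$, so $g$ descends to a map $\bar{g}\colon M\to P_N$; since $\iota b\widetilde{a}$ takes values in $\ker\pi_N$, we have $\pi_N\bar{g}=f$, exhibiting $f$ as a map factoring through the projective $P_N$, hence $[f]=0$ in $\underline{\mathrm{Hom}}_{\mathbb{Z}[G]}(M,N)$.

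The only real obstacle is the key vanishing lemma, which is the sole place where both the finiteness of $G$ and the $\mathbb{Z}$-projectivity of $M$ are genuinely needed; once it is in hand, the two diagram chases above are formal and parallel the classical argument that $\Omega$ induces an equivalence on the stable module category over a self-injective ring, here adapted to the relative setting in which $\mathbb{Z}[G]$-projectives play the role of ``relatively injective'' objects for $\mathbb{Z}$-split extensions.
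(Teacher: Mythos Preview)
Your proof is correct and follows essentially the same route as the paper's. The paper phrases the key lemma as ``projective $\mathbb{Z}[G]$-modules are relatively injective for $\mathbb{Z}$-split extensions'' (citing Brown) rather than your equivalent $\mathrm{Ext}^1_{\mathbb{Z}[G]}(M,Q)=0$, and it packages the argument as the construction of an explicit inverse $\underline{\mathrm{Hom}}_R(\Omega M,\Omega N)\to\underline{\mathrm{Hom}}_R(M,N)$ rather than separate injectivity and surjectivity checks; but the underlying extension step---lifting a map out of $\Omega M$ across the $\mathbb{Z}$-split inclusion $\Omega M\hookrightarrow P_M$---is identical in both.
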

\begin{proof}
Before we start recall (\cite{key-3} VI 2) that a $\mathbb{Z}[G]$-module
$Q$ is called relatively injective if for every injection $A\hookrightarrow B$
of $\mathbb{Z}[G]$-modules which splits as an injection of Abelian
groups and every $\mathbb{Z}[G]$ homomorphism $A\rightarrow Q$ there
exists an extension to a $\mathbb{Z}[G]$ homomorphism $B\rightarrow Q$,
and that if $G$ is a finite group every projective module is relatively
injective.\\
We construct an inverse to this map. Given a map $f:\Omega M\rightarrow\Omega N$.
We have the following diagram:

\[
\begin{array}{ccccccccc}
0 & \longrightarrow & \Omega(M) & \longrightarrow & P_{M} & \longrightarrow & M & \longrightarrow & 0\\
\downarrow &  & \downarrow f\\
0 & \longrightarrow & \Omega(N) & \longrightarrow & P_{N} & \longrightarrow & N & \longrightarrow & 0\end{array}\]

Since $M$ is projective as an Abelian group the upper row splits
as Abelian groups. This means that $\Omega(M)\longrightarrow P_{M}$
is a split injection as Abelian groups. $P_{N}$ is projective and
hence relatively injective therefore we can extend the homomorphism
$\Omega(M)\longrightarrow P_{N}$ to a homomorphism $\tilde{f}:P_{M}\rightarrow P_{N}$
such that the diagram will commute. This induces a homomorphism $\overline{f}:M\rightarrow N$.
Of course $\overline{f}$ depends on the choice of $\tilde{f}$. Suppose
that $\tilde{f}_{1},\tilde{f_{2}}$ are two extensions then $\tilde{f_{1}}-\tilde{f_{2}}$
vanishes on $\Omega(M)$ hence the map $\overline{f}_{1}-\overline{f}_{2}:M\rightarrow N$
factors through $P_{N}$ which is projective. This gives a well defined
homomorphism $Hom_{R}(\Omega M,\Omega N)\rightarrow\underline{Hom}_{R}(M,N)$.
Assume $f:\Omega M\rightarrow\Omega N$ factors through a projective
$P$ then we can choose $\tilde{f}$ to factor through $P$ again
since it is relatively injective and get that $\overline{f}$ is the
zero map:

\[
\begin{array}{ccccccccc}
0 & \longrightarrow & \Omega(M) & \longrightarrow & P_{M} & \longrightarrow & M & \longrightarrow & 0\\
\downarrow &  & \downarrow &  & \downarrow &  & \downarrow &  & \downarrow\\
0 & \longrightarrow & P & \longrightarrow & P & \longrightarrow & 0 & \longrightarrow & 0\\
\downarrow &  & \downarrow &  & \downarrow s &  & \downarrow &  & \downarrow\\
0 & \longrightarrow & \Omega(N) & \longrightarrow & P_{N} & \longrightarrow & N & \longrightarrow & 0\end{array}\]

Hence we get a homomorphism $\underline{Hom}_{R}(\Omega M,\Omega N)\rightarrow\underline{Hom}_{R}(M,N)$
which is easily seen to be the inverse of the homomorphism $\underline{Hom}_{R}(M,N)\rightarrow\underline{Hom}_{R}(\Omega M,\Omega N)$.
\end{proof}
We have defined the endofunctor $\Omega$. We define $\Omega^{n}$
by induction: $\Omega^{0}=Id$ and $\Omega^{n}=\Omega\circ\Omega^{n-1}$.
\begin{prop}
Let $M$ be an $R$-module and let $...\rightarrow Q_{n-1}\rightarrow...\rightarrow Q_{0}\rightarrow M$
be any projective resolution of $M$, then $\Omega^{n}(M)$ can be
identified with $Ker(Q_{n-1}\rightarrow Q_{n-2})$, that is there
is a canonical map $Ker(Q_{n-1}\rightarrow Q_{n-2})\rightarrow\Omega^{n}(M)$
which is an isomorphism in the category $St-mod(R)$.\end{prop}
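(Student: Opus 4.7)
The plan is to argue by induction on $n$, with the base case being the essential content (a stable version of Schanuel's lemma) and the inductive step following cleanly from functoriality of $\Omega$ on $St\text{-}mod(R)$. For $n=0$ both sides are $M$, so nothing is to prove.

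For $n=1$, given any short exact sequence $0\to K\to Q_{0}\to M\to 0$ with $Q_{0}$ projective, I would compare it with the chosen $0\to\Omega(M)\to P_{M}\to M\to 0$. Projectivity of $Q_{0}$ and surjectivity of $\pi_{M}$ yield a lift $\widetilde{f}\colon Q_{0}\to P_{M}$ of $\mathrm{id}_{M}$, and symmetrically a lift $\widetilde{g}\colon P_{M}\to Q_{0}$; these restrict to maps $a\colon K\to\Omega(M)$ and $b\colon\Omega(M)\to K$. The composition $\widetilde{g}\circ\widetilde{f}$ lifts $\mathrm{id}_{M}$, so $\widetilde{g}\circ\widetilde{f}-\mathrm{id}_{Q_{0}}$ sends $Q_{0}$ into $K$; hence $b\circ a-\mathrm{id}_{K}$ factors as $K\hookrightarrow Q_{0}\to K$, i.e.\ through the projective $Q_{0}$, so $b\circ a=\mathrm{id}_{K}$ in $St\text{-}mod(R)$. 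The symmetric argument using projectivity of $P_{M}$ shows $a\circ b=\mathrm{id}_{\Omega(M)}$ in $St\text{-}mod(R)$. This establishes the canonical isomorphism $K\cong\Omega(M)$. Moreover, the construction does not depend on the choice of lift: the lemma already proved in the appendix shows any two lifts induce the same map in $\underline{Hom}_{R}(K,\Omega(M))$, so the map $a$ is canonical.

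For the inductive step, write $K_{i}=\ker(Q_{i-1}\to Q_{i-2})$ (with $K_{1}=\ker(Q_{0}\to M)$). Exactness of the resolution gives short exact sequences $0\to K_{i+1}\to Q_{i}\to K_{i}\to 0$ with $Q_{i}$ projective, and $\cdots\to Q_{n}\to Q_{n-1}\to\cdots\to Q_{1}\to K_{1}$ is a projective resolution of $K_{1}$ of length one less. By the $n=1$ case, $K_{1}\cong\Omega(M)$ canonically in $St\text{-}mod(R)$, and by the inductive hypothesis applied to this shorter resolution, $K_{n}\cong\Omega^{n-1}(K_{1})$ canonically in $St\text{-}mod(R)$. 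Applying the functor $\Omega^{n-1}$, which is well defined on $St\text{-}mod(R)$ by the previous lemma, to the isomorphism $K_{1}\cong\Omega(M)$ gives $\Omega^{n-1}(K_{1})\cong\Omega^{n}(M)$, and composing the two yields the desired isomorphism $K_{n}\cong\Omega^{n}(M)$.

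The main subtlety (and the step I would be most careful with) is verifying that the resulting isomorphism is indeed the canonical one, rather than an artifact of choices. This reduces to two checks: first, that the map $a\colon K\to\Omega(M)$ built in the base case is independent of the lift (which follows from part 1 of the lemma in the appendix applied to the identity of $M$), and second, that under the standard comparison of projective resolutions the chain map lifting $\mathrm{id}_{M}$ restricts on $n$-th syzygies to the composition of the base-case isomorphism in degree $1$ with the image under $\Omega^{n-1}$ of the base-case isomorphism for the shifted resolution. Both are essentially bookkeeping with the comparison theorem for projective resolutions, whose homotopy-uniqueness statement already lives in $St\text{-}mod(R)$ and so transfers to uniqueness of the induced map on syzygies.
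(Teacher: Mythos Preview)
Your proposal is correct and follows essentially the same route as the paper: induction on $n$, with the base case being the stable Schanuel comparison of two projective covers of $M$ via lifts of $\mathrm{id}_{M}$, and the inductive step obtained by peeling off one syzygy and invoking functoriality of $\Omega$. The paper is terser---it packages the base case as ``what we have already showed in the case of the projective cover of $M$'' (i.e.\ the preceding lemma applied to $\mathrm{id}_{M}$) and phrases the induction as comparing an arbitrary resolution to the canonical one built from iterated chosen covers---whereas you spell out the Schanuel argument and the canonicity check explicitly; but the content is the same.
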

\begin{proof}
Given an $R$-module $M$ we construct a canonical projective resolution
of it using the projective covers we have chosen before. We do it
by induction where $P_{n}$ is defined to be the projective cover
of $Ker(P_{n-1}\rightarrow P_{n-2})$ with the induced map $P_{n}\rightarrow P_{n-1}$,
which clearly make this into a projective resolution. Notice that
by the definition of $\Omega$ we have $\Omega^{n}(M)=Ker(P_{n-1}\rightarrow P_{n-2})$,
and for a map $f:M\rightarrow N$ the map $\Omega^{n}(f)$ can be
be constructed by extending the map $f$ to a chain map between the
two resolutions. In order to prove the proposition it will suffice
to show that given two projective resolutions of $M$ $...\rightarrow Q_{n-1}\rightarrow...\rightarrow Q_{0}\rightarrow M$
and $...\rightarrow P_{n-1}\rightarrow...\rightarrow P_{0}\rightarrow M$
there is a canonical isomorphism $Ker(Q_{n-1}\rightarrow Q_{n-2})\rightarrow Ker(P_{n-1}\rightarrow P_{n-2})$.
This follows directly by induction from what we have already showed
in the case of a the projective cover of $M$. \end{proof}
\begin{rem*}
By similar reasons we can compute the induced maps $\Omega^{n}(f)$
for any map $f:M\rightarrow N$ by taking any two resolutions for
$M$ and for $N$ and extending $f$ into a chain map between the
two resolutions.\end{rem*}

\end{document}